\definecolor{darkblue}{rgb}{0,0,0.7}
\definecolor{darkred}{rgb}{0.7,0,0}
\newtheorem{theorem}{Theorem}[section]
\newtheorem{conjecture}[theorem]{Conjecture}
\newtheorem{corollary}[theorem] {Corollary}
\newtheorem{lemma}[theorem]{Lemma}
\newtheorem{result}[theorem]{Result}
\newtheorem{proposition}[theorem]{Proposition}
\theoremstyle{definition}
\newtheorem{definition}[theorem]{Definition}
\newtheorem{example}[theorem]{Example}
\newtheorem*{problem*}{Problem}
\tikzset{edgee/.style = {->,> = latex'}}
\newcolumntype{P}[1]{>{\centering\arraybackslash}p{#1}}
\newcolumntype{M}[1]{>{\centering\arraybackslash}m{#1}}
\newcommand{\one}{\textup{\texttt{1}}}
\newcommand{\zero}{\textup{\texttt{0}}}
\begin{document}

\title{Subsequence frequency in binary words}

\author{Krishna Menon}
\address{Department of Mathematics, Chennai Mathematical Institute, India}
\email{krishnamenon@cmi.ac.in}
\author{Anurag Singh}
\address{Department of Mathematics, Indian Institute of Technology (IIT) Bhilai, India}
\email{anurags@iitbhilai.ac.in}

\keywords{Binary words, Subsequence frequency, Wilf equivalence, Pattern avoidance}
\subjclass{05A05, 05A15, 05A19}
\maketitle
\begin{abstract}
The numbers we study in this paper are of the form $B_{n, p}(k)$, which is the number of binary words of length $n$ that contain the word $p$ (as a subsequence) exactly $k$ times. 
Our motivation comes from the analogous study of pattern containment in permutations. 
In our first set of results, we obtain explicit expressions for $B_{n, p}(k)$ for small values of $k$. 
We then focus on words $p$ with at most $3$ runs and study the maximum number of occurrences of $p$ a word of length $n$ can have. 
We also study the internal zeros in the sequence $(B_{n, p}(k))_{k \geq 0}$ for fixed $n$ and discuss the unimodality and log-concavity of such sequences.
\end{abstract}

\section{Introduction}

Binary words and their subsequences is a classical topic and has been well-studied. 
For example, one direction is the study of subsequences whose indices are consecutive. 
Such subsequences are usually called \textit{factors} and there has been extensive study of words avoiding a given set of factors (see, for instance, \cite{guibas1981string,maxfac}). 
Another direction is the study of distinct subsequences that occur in a binary word. 
In \cite{snlc}, these numbers, graded by subsequence length, are shown to be log-concave. 
Maximizing the total number of distinct subsequences has been studied as well \cite{maxdistsub}. 
There have also been many other topics relating to binary words and their subsequences that have been studied (see, for instance, \cite{reconst,algosubseqcomb, longestcommon}).

In this paper, we focus on counting binary word according to the number of its subsequences that match a given word. 
Our motivation comes from the analogous study of pattern containment in permutations. 
Although most of the work in permutation patterns  is about avoiding a given set of patterns, there have also been studies about permutations that contain a given pattern exactly $k$ times for $k \geq 0$. 
However, the questions for $k \geq 1$ seem to be more difficult than the $k = 0$ case, even for patterns of length $3$ (see \cite{bona1997number, bona1998permutations,freqseq,fulmek2003enumeration,mansour2002counting, noonan1996number,noonan1996enumeration}). 
Subsequences in binary words have also been used in the study of permutation patterns, for example, see \cite{grass,invseq,grassid,4k}. 
We note that similar questions have also been tackled for pattern containment in words over a finite (totally ordered) alphabet \cite{burstein2002counting,mansourbook}. 

The numbers we study in this paper are of the form $B_{n, p}(k)$, which is the number of binary words of length $n$ that contain the word $p$ exactly $k$ times. 
In \Cref{prelim}, we set up notations and make relevant definitions. 
We then obtain explicit expressions for $B_{n, p}(k)$ for small values of $k$ in \Cref{expsec}. 
In \Cref{maxoccandintzerosec}, for certain binary words $p$, we study the maximum number of occurrences of $p$ a binary word of length $n$ can have. 
We also study internal zeros in the sequence $(B_{n, p}(k))_{k \geq 0}$ for fixed $n$. 
Furthermore, we discuss the unimodality and log-concavity of such sequences. 
We end this article with directions for future research in \Cref{concsec}.

\section{Preliminaries}\label{prelim}

A \emph{binary word} is a finite sequence $w = w_1 w_2 \cdots w_n$ where $w_i \in \{\zero, \one\}$ for all $i \in [n]$. 
Here $n$ is called the \emph{length} of $w$. 
An \emph{occurrence} of a binary word $p = p_1 p_2 \cdots p_l$ in a binary word $w = w_1 w_2 \cdots w_n$ is a subsequence of $w$ that matches $p$, i.e., a choice of indices $1 \leq i_1 < i_2 < \cdots < i_l \leq n$ such that $w_{i_1} w_{i_2} \cdots w_{i_l} = p$.

Analogous to the notation of \cite{freqseq}, we denote the number of occurrences of $p$ in $w$ by $c_p(w)$. 
For example, $c_{\one\zero}(\one\zero\zero\one\zero) = 4$. 
For any binary word $p$ and $n, k \geq 0$, we define $B_{n, p}(k)$ to be the number of binary words of length $n$ that have exactly $k$ occurrences of $p$. 
That is, $$B_{n, p}(k) = \#\{w \in \{\texttt{0}, \texttt{1}\}^n \mid c_p(w) = k\}.$$

The following results are simple consequences of the definitions.

\begin{result}
Let $p$ be a binary word of length $l$. 
For any $n \geq 0$, we have
\begin{itemize}
    \item $\sum\limits_{k = 0}^n B_{n, p}(k) = 2^n$.
    
    \item $\sum\limits_{k = 0}^n k \cdot B_{n, p}(k) = 2^{n - l} \binom{n}{l}$.
\end{itemize}
\end{result}

From the second statement above, we also get that the average number of occurrences of a binary word of length $l$ among binary words of length $n$ is $2^{-l} \binom{n}{l}$.

We denote the reverse of a binary word $p$ by $p^r$. 
The \emph{complement} of a binary word is the one obtained by changing each $\texttt{1}$ in the word to $\texttt{0}$ and vice versa. 
We denote the complement of $p$ by $p^c$. 
We say that the words $p$, $p^r$, $p^c$, and $p^{rc}$ are \emph{trivially equivalent}. 
Here $p^{rc}$ is the complement of $p^r$. 
One can check that if $p$ and $q$ are trivially equivalent, we have $B_{n, p}(k) = B_{n, q}(k)$ for all $n, k \geq 0$.

For any binary word $w$, we use $w_i$ to denote the $i^{th}$ letter in $w$. 
A \textit{run} in a binary word $w$ of length $n$ is a subsequence $w_{i+1} w_{i + 2} \cdots w_{i + k}$ of consecutive letters such that
\begin{enumerate}
    \item all letters are equal,
    \item either $i=0$ or $w_{i} \neq w_{i + 1}$, and
    \item either $i+k=n$ or $w_{i+k+1} \neq w_{i+k}$.
\end{enumerate}
Here $k$ is called the \emph{size} of the run. 
Hence, a run is a maximal subsequence of consecutive letters that are equal. 
For instance, the word $\texttt{11100001} = \one^3\zero^4\one^1$ has three runs, which are of sizes $3$, $4$, and $1$ respectively.

For use in \Cref{maxoccandintzerosec}, we recall the following definitions. 
A sequence of non-negative integers $(a_k)_{k = 0}^m$ is said to have an \emph{internal zero} if there exist $0 \leq k_1 < k_2 < k_3 \leq m$ such that $a_{k_1}, a_{k_3} \neq 0$ but $a_{k_2} = 0$. 
The sequence is said to be \emph{unimodal} if there exists an $i \in [0, m]$ such that $a_0 \leq a_1 \leq \cdots \leq a_i \geq a_{i + 1} \geq \cdots \geq a_m$. 
The sequence is said to be \emph{log-concave} if $a_i^2 \geq a_{i - 1}a_{i + 1}$ for all $i \in [m - 1]$.

\section{Binary words with few occurrences}\label{expsec}

In this section, we obtain expressions for $B_{n, p}(k)$ for $k \in [0, 4]$.

\subsection{The cases $k = 0, 1, 2$}

Let $p$ be a binary word of length $l$ that has $r$ runs, $r_i$ of which are of size $i$ for each $i \geq 1$.

\begin{proposition}\label{0count}
    For any $n \geq 0$, we have
    \begin{equation*}
        B_{n, p}(0) = \sum_{j = 0}^{l - 1} \binom{n}{j}.
    \end{equation*}
\end{proposition}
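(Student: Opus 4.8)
My plan is to show that a binary word $w$ of length $n$ avoids $p$ (as a subsequence) if and only if $w$ has a certain restricted structure, and then count such words directly. The key observation is that for subsequence containment, the only thing that matters about $w$ is the sequence of its runs: a word $p$ of length $l$ occurs in $w$ as a subsequence precisely when one can greedily match the letters of $p$ against the runs of $w$. In fact, I claim $c_p(w) > 0$ if and only if $w$, read as a sequence of runs with alternating letters, is ``long enough'' to accommodate $p$ under the greedy embedding. More concretely, if $w$ has runs of letters $b_1, b_2, \dots, b_m$ (so $b_{i+1} \neq b_i$ and $m$ is the number of runs of $w$), then $w$ contains $p$ iff $p$ is a subsequence of the word $b_1 b_2 \cdots b_m$ viewed with multiplicities — but since each run can supply arbitrarily many copies of its letter, the real constraint is just about the alternation pattern. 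The cleanest route: $w$ avoids $p$ iff the greedy leftmost embedding of $p$ into $w$ fails, and one can check that the greedy embedding of $p$ (length $l$) succeeds as long as $w$ has at least $l$ letters arranged so that... — actually the simplest invariant is the number of runs.

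So here is the concrete approach. First I would prove the combinatorial lemma: a binary word $w$ avoids $p$ if and only if $w$ has fewer than $l$ letters, OR more precisely — let me reconsider. The right statement is: $w$ avoids $p$ if and only if $|w| \le l-1$ does NOT capture it (e.g. $w = \one\one\one$ avoids $p = \zero$ regardless of length). The correct characterization must be: among words of length $n$, the ones avoiding $p$ are exactly those whose ``greedy match count'' against $p$ is $0$. I would instead argue by the following bijamerica/counting: show that $w$ contains $p$ as a subsequence if and only if $w$ contains $p$ when we only track positions where $w$ changes value plus enough length. The honest clean claim, which I would verify by induction on $l$, is: \emph{the binary words of length $n$ avoiding the length-$l$ word $p$ are precisely those obtained by taking a ``short enough'' skeleton.} Given the target answer $\sum_{j=0}^{l-1}\binom{n}{j}$, which counts subsets of $[n]$ of size at most $l-1$, I expect the bijection to send an avoiding word $w$ to the set of ``descent-like'' positions (positions where matching $p$ greedily would advance), and the avoidance condition forces at most $l-1$ such positions.

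The cleanest rigorous path, and the one I would actually write: induct on $l$. For $l = 0$, $p$ is the empty word, every word contains it (zero occurrences is impossible unless... well $c_\epsilon(w) = 1 \neq 0$), so $B_{n,p}(0) = 0 = \sum_{j=0}^{-1}$, consistent. For the inductive step, write $p = p_1 p'$ where $p'$ has length $l-1$. A word $w = w_1\cdots w_n$ avoids $p$ iff, letting $i$ be the first index with $w_i = p_1$ (if none exists, $w$ trivially avoids $p$ and is one of the $1$ words of length $n$ using only the other letter — wait, there are more), the suffix $w_{i+1}\cdots w_n$ avoids $p'$. I would set up the recursion $B_{n,p}(0) = B_{n,p'}(0) + (\text{contribution of words where } p_1 \text{ never appears, handled separately})$, then check it telescopes to $\sum_{j=0}^{l-1}\binom{n}{j}$ using Pascal's identity $\binom{n}{j} = \binom{n-1}{j} + \binom{n-1}{j-1}$.

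\textbf{Main obstacle.} The delicate point is handling the case where the first letter $p_1$ of $p$ does not occur in $w$ at all, or more generally bookkeeping the "first occurrence" position correctly so the recursion closes cleanly; getting the base cases and the off-by-one in the summation index right (especially reconciling $l=1$, where the answer should be $\binom{n}{0} = 1$, i.e. only the constant word $p^c p^c \cdots$ avoids a single letter) is where I'd be most careful. Alternatively, if the induction-on-$l$ recursion proves awkward, I would fall back on the direct characterization: $w$ avoids $p$ iff the greedy leftmost embedding of $p$ into $w$ uses up all of $w$'s letters before finishing $p$, and then biject avoiding words with the $\le (l-1)$-element subsets of $[n]$ recording which letters of $w$ are "consumed" by the greedy match — the complement structure then gives exactly $\sum_{j=0}^{l-1}\binom{n}{j}$.
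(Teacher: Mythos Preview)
Your fallback approach---record the positions consumed by the greedy leftmost embedding of $p$ into $w$, and biject with subsets of $[n]$ of size at most $l-1$---is exactly what the paper does. Concretely, the paper lets $j \in [0,l-1]$ be the largest index such that the prefix $p_1\cdots p_j$ embeds in $w$; then the (greedy) embedding forces
\[
w = (1-p_1)^{i_1}\, p_1\, (1-p_2)^{i_2}\, p_2 \cdots (1-p_j)^{i_j}\, p_j\, (1-p_{j+1})^{i_{j+1}},
\]
since every letter not used in the match must be the complement of the next letter of $p$ to be matched. Choosing the $j$ positions of the explicit $p_k$'s gives $\binom{n}{j}$ words, and summing over $j$ finishes. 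So your fallback is not a fallback at all; it is the clean one-paragraph proof, and you should lead with it rather than bury it.

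Your induction idea also works, but the recursion you wrote, $B_{n,p}(0) = B_{n,p'}(0) + (\cdots)$, is not the right one. Split instead on the \emph{first letter of $w$}: if $w_1 = 1-p_1$ then $w$ avoids $p$ iff $w_2\cdots w_n$ does, while if $w_1 = p_1$ then $w$ avoids $p$ iff $w_2\cdots w_n$ avoids $p' = p_2\cdots p_l$. This gives $B_{n,p}(0) = B_{n-1,p}(0) + B_{n-1,p'}(0)$, and since $f_l(n) := \sum_{j=0}^{l-1}\binom{n}{j}$ satisfies $f_l(n) = f_l(n-1) + f_{l-1}(n-1)$ by Pascal, the induction closes immediately. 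Either route is fine; the bijection is more direct and is what the paper chose.
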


\begin{proof}
The proof is in the same lines as that of \cite[Proposition 3.22]{4k}. 
We have to count the binary words $w$ of length $n$ that do not contain the subsequence $p = p_1 p_2 \cdots p_l$. 
Suppose $j \in [0, l - 1]$ is the largest number such that $w$ contains $p_1 p_2 \cdots p_j$. 
Then, using the rightmost occurrence of $p_1p_2 \cdots p_j$, we can see that $w$ must be of the form
\begin{equation*}
    (1 - p_1)^{i_1}\ p_1\ (1 - p_2)^{i_2}\ p_2\ \cdots\ (1 - p_j)^{i_j}\ p_j\ (1 - p_{j + 1})^{i_{j + 1}}
\end{equation*}
where $i_k \geq 0$ for each $k \in [j + 1]$. 
There are $\binom{n}{j}$ such words of length $n$. 
Summing over $j \in [0, l - 1]$ gives us the required result.
\end{proof}

\begin{proposition}\label{1count}
For any $n \geq 0$, we have
\begin{equation*}
    B_{n, p}(1) = \binom{n - r + 1}{l - r + 1}.
\end{equation*}
\end{proposition}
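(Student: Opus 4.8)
The idea is to characterize exactly which binary words $w$ of length $n$ satisfy $c_p(w) = 1$, i.e. contain $p$ as a subsequence in a unique way, and then count them by a stars-and-bars argument analogous to the proof of \Cref{0count}. Write $p = p_1 p_2 \cdots p_l$ and let its runs have sizes $s_1, s_2, \ldots, s_r$ (so $s_1 + \cdots + s_r = l$). A word $w$ containing $p$ has a unique occurrence precisely when the ``greedy'' leftmost embedding and the ``greedy'' rightmost embedding of $p$ into $w$ coincide; I will argue this forces $w$ to have a very rigid shape. Concretely, I claim $c_p(w) = 1$ if and only if $w$ is obtained from $p$ by inflating the runs of $p$: that is, $w$ has exactly $r$ runs, the $i$-th run of $w$ has the same letter as the $i$-th run of $p$ and has size at least $s_i$, and no extra letters appear before the first run or after the last run or between consecutive runs. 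Equivalently, writing the run letters of $p$ as $a_1, a_2, \ldots, a_r$ (alternating), we need
\begin{equation*}
    w = a_1^{t_1} a_2^{t_2} \cdots a_r^{t_r}, \qquad t_i \geq s_i, \qquad t_1 + t_2 + \cdots + t_r = n.
\end{equation*}

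The two directions: For the ``if'' direction, given such a $w$, any occurrence of $p$ must use letters from the blocks in order, and since the $i$-th block of $w$ is the only place supplying the letters of the $i$-th run of $p$ (adjacent blocks have the opposite letter), the occurrence is forced to take, within block $i$, some $s_i$ of the $t_i$ available positions — but because the run of $p$ is maximal, the letters it matches in $w$ must be a contiguous sub-block that is ``pinned'' on both sides by the block boundaries; a short check shows there is exactly one valid way overall, so $c_p(w) = 1$. For the ``only if'' direction, suppose $c_p(w)=1$ but $w$ is not of the above form: then either $w$ has a letter outside the inflated-run shape (an extra letter before/after/between the blocks, or an extra run), and in each case I exhibit a second distinct occurrence of $p$ — e.g. an extra letter equal to $a_i$ sitting adjacent to block $i$ gives an alternative choice of which copies of $a_i$ to use, and an extra run lets us slide the embedding past it — contradicting uniqueness.

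Finally, the count: the number of solutions to $t_1 + \cdots + t_r = n$ with $t_i \geq s_i$ is, after substituting $t_i = s_i + u_i$ with $u_i \geq 0$, the number of solutions to $u_1 + \cdots + u_r = n - l$ in non-negative integers, which is $\binom{n - l + r - 1}{r - 1}$. A quick manipulation shows $\binom{n-l+r-1}{r-1} = \binom{n-r+1}{l-r+1}$ (both count the same thing: $\binom{n-r+1}{(n-r+1)-(l-r+1)} = \binom{n-r+1}{n-l}$, and $n - l = (n-l+r-1) - (r-1)$), giving the stated formula. The main obstacle is the ``only if'' direction — rigorously showing that \emph{any} deviation from the inflated-run shape produces a genuinely second occurrence of $p$, handling the boundary cases (before the first run, after the last run) and the ``extra run'' case uniformly; the cleanest route is probably to define the leftmost and rightmost embeddings explicitly and show their non-coincidence is equivalent to $w$ not having the claimed form, rather than doing a case analysis by hand.
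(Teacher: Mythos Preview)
Your characterization of the words with $c_p(w)=1$ is incorrect, and the proof breaks down there. You claim that $c_p(w)=1$ if and only if $w = a_1^{t_1}a_2^{t_2}\cdots a_r^{t_r}$ with $t_i \geq s_i$; but such a word has exactly $\prod_{i=1}^{r}\binom{t_i}{s_i}$ occurrences of $p$, which is $1$ only when every $t_i = s_i$, i.e.\ when $w=p$. Concretely, for $p=\one\one\zero$ your description includes $w=\one\one\one\zero$, yet $c_p(w)=\binom{3}{2}=3$. The ``pinning'' argument in your ``if'' direction is where the error lives: within block $i$ there are $\binom{t_i}{s_i}$ ways to pick the $s_i$ positions, and nothing pins them down.

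The paper's characterization is essentially the opposite of yours: the extra $n-l$ letters must be inserted \emph{between equal consecutive letters of $p$} (or before the first / after the last letter), and each inserted letter must be of the \emph{opposite} type to its neighbours. This gives $l-r+2$ usable slots (not $r$), and stars-and-bars yields $\binom{n-r+1}{l-r+1}$. For $p=\one\zero\zero\zero\one\one$, a typical word with one occurrence looks like $\zero\zero\one\zero\zero\one\zero\one\zero\zero\one$, which has many runs, not three. Relatedly, the binomial ``identity'' $\binom{n-l+r-1}{r-1}=\binom{n-r+1}{l-r+1}$ you invoke at the end is false in general (e.g.\ $n=7$, $l=6$, $r=3$ gives $3\neq 5$), so even the final bookkeeping does not rescue the argument. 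Your instinct to compare the leftmost and rightmost embeddings is sound; carrying it out carefully leads to the paper's description rather than the inflated-run form.
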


\begin{proof}
We have to count the binary words of length $n$ that contain the subsequence $p$ exactly once. 
We do this by seeing how we can add letters to $p$ to obtain such a word. 
Consider the spaces between the letters of $p$ as \emph{slots}, including the spaces before the first and after the last letter.

If two adjacent letters of $p$ are different, then adding any letter to the slot between them will result in a word having more than one occurrence of $p$. 
If two adjacent letters of $p$ are equal, then adding a letter of the same type in the slot between them results in a word having more than one occurrence of $p$. 
However, adding letters of the opposite type in this slot will not increase the number of occurrences of $p$.

Also, we cannot add letters equal to the first letter of $p$ in the first slot nor can we add letters equal to the last letter of $p$ in the last slot. 
But we can always add letters of the opposite type before the first or after the last letter respectively.

Words constructed in such a way will have only one occurrence of $p$ since the lexicographically smallest and largest occurrence of $p$ coincide.

\begin{example}\label{1example}
Suppose $p = \one\zero\zero\zero\one\one$. 
The procedure described above is represented in the following, where the letters under the slots represent what types of letters can be inserted in them.
\begin{equation*}
    \underbracket{\quad}_{\zero}\ \one\ \underbracket{\quad}\ \zero\ \underbracket{\quad}_{\one}\ \zero\ \underbracket{\quad}_{\one}\ \zero\ \underbracket{\quad}\ \one\ \underbracket{\quad}_{\zero}\ \one\ \underbracket{\quad}_{\zero}
\end{equation*}
For example, one way to insert letters into slots is as follows.
\begin{equation*}
    \underbracket{\ \zero\ \zero\ }_{\zero}\ \one\ \underbracket{\quad}\ \zero\ \underbracket{\quad}_{\one}\ \zero\ \underbracket{\ \one\ }_{\one}\ \zero\ \underbracket{\quad}\ \one\ \underbracket{\ \zero\ \zero\ }_{\zero}\ \one\ \underbracket{\quad}_{\zero}
\end{equation*}
This gives the word $\zero\zero\one\zero\zero\one\zero\one\zero\zero\one$ that contains exactly one occurrence of $p$.
\end{example}

The observations above show that we can add letters into $(l + 1) - (r - 1)$ slots and in each slot there is only one type of letter we can add. 
Hence, the required count is the number of ways to putting $n - l$ indistinguishable balls into $l - r + 2$ distinguishable boxes, which gives us the required result (see \cite[Section 1.9]{ec1}).
\end{proof}

\begin{proposition}\label{2count}
For any $n \geq 0$, we have
\begin{equation*}
    B_{n, p}(2) =
    \begin{cases}
    r_1 \dbinom{n - r}{l - r + 1} &\text{if $l \geq 2$}\\[0.5cm]
    \dbinom{n}{2} &\text{if $l = 1$.}
    \end{cases}
\end{equation*}
\end{proposition}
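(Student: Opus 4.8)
The plan is to adapt the slot-insertion bookkeeping from the proof of \Cref{1count}, but now allowing configurations that create exactly one ``extra'' occurrence of $p$. When $l = 1$, say $p = \one$ (the case $p = \zero$ being trivially equivalent), a word of length $n$ has exactly two occurrences iff it has exactly two $\one$'s, and there are $\binom{n}{2}$ such words; this disposes of the second case. So assume $l \geq 2$.

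For $l \geq 2$, I would argue that every word $w$ with $c_p(w) = 2$ arises by inserting letters into the $l+1$ slots of $p = p_1 \cdots p_l$ in essentially the ``free'' way of \Cref{1count} (inserting into a slot only the single permitted letter type), \emph{except} that exactly one additional letter is placed so as to duplicate one occurrence of $p$. The key structural claim is that a word has exactly two occurrences of $p$ precisely when its lexicographically smallest and largest occurrences of $p$ differ in exactly one position. Tracking where that single discrepancy can sit: it must come from a slot lying strictly between two equal adjacent letters $p_i = p_{i+1}$, where we now insert one letter of the \emph{same} type as $p_i$ (creating a second way to choose that letter of $p$) while continuing to fill all other slots in the constrained fashion. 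There are $r_1$ runs of size $1$; I would need to check that the slots giving rise to a valid ``doubling'' are exactly those adjacent to... wait---more carefully, a run of size $1$ at position $i$ means $p_{i-1} \neq p_i \neq p_{i+1}$, and the relevant move is inserting a copy of $p_i$ on one side; I expect the correct count of ``doubling slots'' to be exactly $r_1$, one per run of size $1$, and verifying this bijection is the crux.

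Concretely, once the doubling slot is fixed, one letter is spent there, and the remaining $n - l - 1$ inserted letters are distributed freely among the $l - r + 2$ constrained slots (as in \Cref{1count}), giving $\binom{(n-l-1) + (l - r + 2) - 1}{(l-r+2)-1} = \binom{n - r}{l - r + 1}$ words per choice of doubling slot, hence $r_1 \binom{n-r}{l-r+1}$ in total. I would also verify the degenerate cases---e.g.\ $p = \one\zero$ or $p = \one\one$ where $r_1 = 2$ or $r_1 = 0$---match known small values, and confirm no word with $c_p(w) = 2$ is counted twice or missed (different doubling slots yield words with different smallest/largest occurrence pairs).

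The main obstacle I anticipate is the structural lemma: proving rigorously that $c_p(w) = 2$ forces exactly one such doubling and that distinct doubling slots produce genuinely distinct words, with no overcounting. In particular I would need to rule out a single inserted letter simultaneously enabling two separate alternative choices, and to handle runs of size $2$ carefully, since inserting the opposite-type letter there is ``free'' but inserting the same-type letter there might over-duplicate (produce $\geq 3$ occurrences) rather than give exactly $2$; pinning down that it is precisely the size-$1$ runs that contribute is the heart of the argument.
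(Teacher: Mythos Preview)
Your plan is essentially the paper's own approach: build words with $c_p(w)=2$ by taking a word with a unique occurrence of $p$ (counted as in \Cref{1count}) and ``doubling'' the letter at a size-$1$ run, then prove the converse structural lemma that any word with exactly two occurrences must arise this way. Your anticipated crux---that the two occurrences differ at a single index $k$, that $p_k$ must be a size-$1$ run, and that the two copies of $p_k$ sit at consecutive positions---is exactly what the paper establishes, so once you discard the false start about ``a slot between two equal adjacent letters $p_i=p_{i+1}$'' (which you correctly abandon), the argument goes through and your count $r_1\binom{n-r}{l-r+1}$ matches.
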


\begin{proof}
The case $l = 1$ is straightforward, so we assume $l \geq 2$. 
We first describe a way to construct words of length $n$ that contain $p$ twice. 
We then show that all such words are constructed by this procedure.

Suppose $p_k$ forms a run of size $1$ in $p$. 
Start with a word $w'$ of length $n - 1$ such that $c_p(w') = 1$. 
Suppose the occurrence of $p$ in $w'$ is at indices $i_1 < i_2 < \cdots < i_l$. 
The word $w$ of length $n$ obtained by adding the letter $p_k$ immediately after $w'_{i_k}$ satisfies $c_p(w) = 2$.

\begin{example}\label{2countex}
Let $p = \one\one\zero\one\zero\zero$. 
Using the procedure described in the proof of \Cref{1count}, we express an instance of the above method using a `slot diagram'. 
We choose $p_3 = \zero$ as the run of size $1$.

$$\one\ \one\ \textcolor{red}{\zero}\ \one\ \zero\ \zero$$
$$\downarrow$$
$$\one\ \one\ \textcolor{red}{\zero\ \zero}\ \one\ \zero\ \zero$$
$$\downarrow$$
\begin{equation*}
    \underbracket{\quad}_{\zero}\ \one\ \underbracket{\quad}_{\zero}\ \one\ \underbracket{\quad}\ \textcolor{red}{\zero\ \zero}\ \underbracket{\quad}\ \one\ \underbracket{\quad}\ \zero\ \underbracket{\quad}_{\one}\ \zero\ \underbracket{\quad}_{\one}
\end{equation*}

Inserting letters into the appropriate slots, we can get the word $w = \one\zero\one\zero\zero\one\zero\one\one\zero\one$. 
The fact that $c_p(w) = 2$ is illustrated below, where the two rows of dashes indicate the occurrences.

\setcounter{MaxMatrixCols}{20}
\begin{equation*}
    \begin{matrix}
        \one & \zero & \one & \zero & \zero & \one & \zero & \one & \one & \zero & \one\\
        - &  & - & - &  & - & - &  &  & - & \\
        - &  & - &  & - & - & - &  &  & - & \\
    \end{matrix}
\end{equation*}
\end{example}

Conversely, suppose $w$ is a binary word of length $n$ that has exactly $2$ occurrences of $p$ at indices $i_1 < i_2 < \cdots < i_l$ and $j_1 < j_2 < \cdots < j_l$. 
Let $k \in [l]$ be the smallest such that $i_k \neq j_k$. 
Without loss of generality, we assume $i_k < j_k$.

We must have $i_m = j_m$ for all $m > k$. 
Otherwise, $i_1 < i_2 < \cdots < i_k < j_{k + 1} < \cdots < j_l$ gives a third occurrence of $p$. 
Also, $p_k$ must form a run of size $1$ in $p$. 
If $p_{k - 1} = p_k$, then $i_1 < i_2 < \cdots < i_{k - 2} < i_k < j_k < j_{k + 1} < \cdots < j_l$ gives a third occurrence of $p$. 
We get a similar contradiction if $p_k = p_{k + 1}$. 
Using similar ideas, we can also prove that $j_k = i_k + 1$.

This shows that deleting $w_{j_k}$ (or $w_{i_k}$) results in a word of length $n - 1$ that has exactly one occurrence of $p$. 
Hence, $w$ is obtained by the procedure described at the beginning of the proof. 
Using \Cref{1count}, we get the required result.
\end{proof}

We also note the following result which is easy to verify by studying how many occurrences of a binary word $p$ there are in a word obtained by adding a letter to $p$. 
Recall that $r_i$ denotes the number of runs of length $i$ in $p$.

\begin{lemma}\label{l+1}
For any $k \geq 2$, we have
\begin{equation*}
    B_{l + 1, p}(k) = r_{k - 1}.
\end{equation*}

\end{lemma}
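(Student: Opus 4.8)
The plan is to analyze directly, for a fixed binary word $p$ of length $l$, all binary words $w$ of length $l+1$ with $c_p(w)\geq 1$. First I would observe that if $c_p(w)\geq 1$, then some occurrence of $p$ uses $l$ of the $l+1$ positions of $w$, so deleting the one unused position recovers $p$; hence every such $w$ is obtained from $p$ by inserting a single letter $a\in\{\zero,\one\}$ into one of the $l+1$ slots of $p$, in the slot terminology of \Cref{1count}. It therefore suffices to understand $c_p(w)$ when $w = p_1\cdots p_j\, a\, p_{j+1}\cdots p_l$ for $j\in[0,l]$.

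Next I would enumerate the occurrences of $p$ in such a $w$: each occurrence omits exactly one position of $w$, so $c_p(w)$ equals the number of positions $t$ for which deleting $w_t$ yields $p$. Omitting the inserted letter $a$ always works, contributing $1$. Omitting a position $t\leq j$ yields $p$ if and only if $p_t = p_{t+1} = \cdots = p_j = a$, and omitting a position $t\geq j+2$ yields $p$ if and only if $p_{j+1} = p_{j+2} = \cdots = p_{t-1} = a$ (with the obvious degenerate reading when $j=0$ or $j=l$). A short bookkeeping argument then gives the dichotomy: if $a$ matches no neighbouring letter of $p$ across its slot, then $c_p(w)=1$; otherwise $a$ extends the (unique) run $R$ of $p$ adjacent to that slot, and $c_p(w) = |R|+1$, where $|R|$ denotes the size of $R$ — and this value is independent of where inside $R$ (or at which end of $R$) the new letter was placed.

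From this the picture is clear: the words $w$ of length $l+1$ with $c_p(w)\geq 2$ are exactly the words $w_R$ obtained from $p$ by enlarging one run $R$ by a single letter, and $c_p(w_R)=|R|+1$. Distinct runs occupy distinct positions and hence give distinct words, so for $k\geq 2$ the number of words of length $l+1$ with $c_p(w)=k$ is the number of runs of $p$ of size $k-1$, which is $r_{k-1}$.

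I expect the only delicate point to be the slot case analysis: one must check carefully that $a=p_j$ forces the count of ``left'' occurrences to be exactly the size of the run of $p$ ending at position $j$ (and symmetrically on the right), and that the boundary slots $j=0$ and $j=l$ fit the same description, so that each run $R$ of $p$ contributes exactly one word, with occurrence-count $|R|+1$. The rest is routine; as a sanity check one can set $n=l+1$ in \Cref{2count}, which gives $B_{l+1,p}(2) = r_1\binom{l-r+1}{l-r+1} = r_1$, matching the case $k=2$.
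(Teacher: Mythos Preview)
Your argument is correct and follows exactly the approach the paper indicates (but does not write out): the paper states the lemma ``is easy to verify by studying how many occurrences of a binary word $p$ there are in a word obtained by adding a letter to $p$,'' and your slot analysis carries this out in full, showing that every $w$ of length $l+1$ with $c_p(w)\geq 2$ arises by enlarging a unique run $R$ of $p$ and satisfies $c_p(w)=|R|+1$. There is nothing to add.
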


\begin{example}
Let $p = \one\zero\zero\zero\one\one$. 
We obtain $w$ from $p$ by increasing the length of the last run from $2$ to $3$ by adding a $\one$, i.e., $w = \one\zero\zero\zero\one\one\one$. 
Clearly, $c_p(w) = \binom{3}{2} = 3$.
\end{example}

\subsection{The case $k=3$}

Let $p$ be a binary word of length $l$ that has $r$ runs, $r_i$ of which are of size $i$ for each $i \geq 1$. 
Also, let $r_{2, b}$ be the number of runs of size $2$ in $p$ that are `at the boundary', i.e., that are either the first or last run. 
For example, if $p = \one\one\one\zero\one\zero$, then $r_{2, b} = 0$, if $p = \one\zero\one\one$, then $r_{2, b} = 1$, and if $p = \one\one\zero\one\one\zero\zero$, then $r_{2, b} = 2$.

\begin{proposition}\label{3count}
For any $n \geq 3$, if $l \geq 3$, we have
\begin{equation*}
    B_{n, p}(3) = r_1 \binom{n - r - 1}{l - r + 1} + (r_2 - r_{2, b}) \binom{n - r - 1}{l - r} + r_{2,b} \binom{n - r}{l - r + 1}
\end{equation*}
and if $l < 3$, we have
\begin{equation*}
    B_{n, p}(3) =
    \begin{cases}
        \binom{n}{3} &\text{if $r = 1$}\\
        3(n - 3) &\text{if $r = 2$.}
    \end{cases}
\end{equation*}
\end{proposition}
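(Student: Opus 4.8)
The plan is to follow the same philosophy as the proofs of \Cref{1count} and \Cref{2count}: describe explicit constructions that produce a length-$n$ word $w$ with $c_p(w) = 3$ starting from a shorter word with fewer occurrences, and then argue that every such $w$ arises in exactly one way from one of these constructions. The case $l < 3$ is handled separately and directly ($p = \zero$ or $\one$ gives $\binom{n}{3}$; the two-letter words are checked by hand, giving $3(n-3)$), so assume $l \geq 3$.

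First I would classify, for a word $w$ with $c_p(w) = 3$, the ``shape'' of its three occurrences. As in \Cref{2count}, write the occurrences by their index tuples and look at the first coordinate $k$ where they start to disagree; a short interchange argument (inserting the tail of a later occurrence after the head of an earlier one produces a fourth occurrence) should force the set of occurrences into one of a small number of rigid configurations. I expect exactly three configurations, matching the three terms:
\begin{itemize}
\item a run $p_k$ of size $1$ gets thickened to size $3$ (contributing $\binom{3}{1}=3$ occurrences locally), which is the source of the $r_1\binom{n-r-1}{l-r+1}$ term — here $n-3$ extra letters are distributed into $l-r+2$ slots but two of the ``free'' letters are already spent on thickening, hence $n-l-2 = (n-r-1)-(l-r+1)$;
\item two adjacent length-$1$ runs... more precisely a run of size $2$ in the \emph{interior} gets thickened to size $3$, giving $\binom{3}{2}=3$ occurrences and the $(r_2 - r_{2,b})\binom{n-r-1}{l-r}$ term;
\item a run of size $2$ \emph{at the boundary} gets thickened to size $3$; because a boundary run has a ``free side'', the slot count is one larger, producing $r_{2,b}\binom{n-r}{l-r+1}$.
\end{itemize}
I would make each construction precise using the slot diagrams of \Cref{1count}: start from the base word $p$, thicken the relevant run, then count insertions into the remaining slots exactly as in \Cref{1count}/\Cref{2count}, each time reducing to a stars-and-bars count.

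The two directions to nail down are: (1) \emph{soundness} — each construction really gives $c_p(w)=3$ and no more, which follows because after thickening a single run all occurrences differ only in how they select letters from that one thickened run, and the ``outside'' part of every occurrence is forced (lexicographically smallest $=$ largest outside the run), just as in the earlier proofs; and (2) \emph{completeness and disjointness} — every $w$ with $c_p(w)=3$ is produced, by exactly one construction and exactly one choice of insertions. Completeness is where the main work lies: I need the interchange argument to genuinely rule out ``spread-out'' configurations of three occurrences (e.g.\ three occurrences that pairwise differ in different coordinates, or differ across two separate runs), and to show that when occurrences differ only within one run, that run must have size $1$ or $2$ in $p$ — a run of size $\geq 3$ in $p$ thickened enough to host $3$ subsequence-occurrences would actually host $\binom{m}{3} \ge 4$ of them, wait, $\binom{4}{3}=4$, so size must stay small. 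The boundary/interior split then comes from counting available slots, the boundary case gaining one slot because no ``opposite-type blocker'' is needed on the outer side.

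The step I expect to be the main obstacle is the completeness argument: carefully enumerating the possible configurations of three occurrences and showing each forces exactly one of the three shapes, with no overlap between the three families (so the three terms add rather than over- or under-count). In particular I must check the edge cases where $p$ has very few runs or where a size-$1$ and a size-$2$ run are adjacent, to be sure a given $w$ isn't counted in two different terms; the coefficients $r_1$, $r_2 - r_{2,b}$, $r_{2,b}$ are exactly what makes the partition of constructed words by ``which run was thickened'' well-defined, and I would verify the total against \Cref{l+1} (setting $n = l+1$ should give $r_2$) and against small explicit examples as a sanity check.
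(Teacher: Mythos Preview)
Your plan matches the paper's proof. The paper organizes the constructions as two ``Methods'': Method~3.1 thickens a size-$1$ run to size $3$ (your first bullet), and Method~3.2 thickens a size-$2$ run to size $3$, splitting into interior and boundary subcases exactly as you do. The completeness argument in the paper is the interchange-style case analysis you outline: compare the lex-smallest and lex-largest occurrences $\boldsymbol{i},\boldsymbol{j}$, let $k$ be the first coordinate where they differ, and split on whether $i_m=j_m$ for all $m>k$ (forcing $p_k$ to be a size-$1$ run, three consecutive copies at $i_k,i_k+1,i_k+2$) or not (forcing $i_{k+1}=j_k$, hence $p_kp_{k+1}$ a size-$2$ run). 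One small point to watch when you flesh out the boundary case: ``thickened to size $3$'' is not literally three consecutive letters there --- the extra slot sits \emph{between} the first and second copies of the boundary letter (so e.g.\ for a first run $\one\one$ the construction produces words beginning $\zero^a\one\zero^b\one\one\cdots$), and this is precisely what accounts for the larger binomial $\binom{n-r}{l-r+1}$.
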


\begin{proof}
For $l < 3$, we deal with the case $r = 2$; the case $r = 1$ is straightforward. 
Note that this means $p = \one\zero$ or $p = \zero\one$, which are trivially equivalent. 
We assume $p = \one\zero$. 
The result follows since it can be checked that words that contain exactly $3$ occurrences of $\one\zero$ are those of the form
\begin{equation*}
    \zero^a \one \zero \zero \zero \one^b \quad \text{or} \quad \zero^a \one \one \one \zero \one^b \quad \text{or} \quad \zero^a \one \zero \one \zero \one^b
\end{equation*}
for some $a, b \geq 0$.

We now consider the case $l \geq 3$. 
Just as in the previous proofs, we first describe methods to construct words that have $3$ occurrences of $p$. 
We then show that all such words are obtain from these methods.

\textbf{Method 3.1:}
Suppose $p_k$ is a run of size $1$ in $p$ and $w'$ is a word of length $n - 2$ such that it has exactly one occurrence of $p$ which is at indices $i_1 < i_2 < \cdots < i_l$. 
Adding two copies of the letter $p_k$ immediately after $w'_{i_k}$ results in a word $w$ of length $n$ such that $c_p(w) = 3$.

\begin{example}
Let $p = \one\one\zero\one\zero\zero$. 
Just as in \Cref{2countex}, we express an instance of the above method using a slot diagram. 
We choose $p_3 = \zero$ as the run of size $1$.
$$\one\ \one\ \textcolor{red}{\zero}\ \one\ \zero\ \zero$$
$$\downarrow$$
$$\one\ \one\ \textcolor{red}{\zero\ \zero\ \zero}\ \one\ \zero\ \zero$$
$$\downarrow$$
\begin{equation*}
    \underbracket{\quad}_{\zero}\ \one\ \underbracket{\quad}_{\zero}\ \one\ \underbracket{\quad}\ \textcolor{red}{\zero\ \zero\ \zero}\ \underbracket{\quad}\ \one\ \underbracket{\quad}\ \zero\ \underbracket{\quad}_{\one}\ \zero\ \underbracket{\quad}_{\one}
\end{equation*}

Inserting letters into the appropriate slots, we can get the word $w = \one\zero\one\zero\zero\zero\one\zero\one\one\zero\one$. 
The fact that $c_p(w) = 3$ is illustrated below, where the two rows of dashes indicate the occurrences.
\setcounter{MaxMatrixCols}{20}
\begin{equation*}
    \begin{matrix}
        \one & \zero & \one & \zero & \zero & \zero & \one & \zero & \one & \one & \zero & \one\\
        - &  & - & - &  &  & - & - &  &  & - & \\
        - &  & - &  & - &  & - & - &  &  & - & \\
        - &  & - &  &  & - & - & - &  &  & - & \\
    \end{matrix}
\end{equation*}
\end{example}

\textbf{Method 3.2:}
Suppose $p_kp_{k + 1}$ is a run of size $2$ in $p$ and $w'$ is a word of length $n - 1$ that has exactly one occurrence of $p$ which is at indices $i_1 < i_2 < \ldots < i_l$. 
If $k = 1$, then the word $w$ of length $n$ obtained by adding the letter $p_1$ immediately after $w'_{i_2}$ satisfies $c_p(w) = 3$. 
If $k = l - 1$, then the word $w$ obtained by adding the letter $p_l$ immediately before $w'_{i_{l - 1}}$ satisfies $c_p(w) = 3$. 
Finally, suppose that $k$ is neither $1$ nor $l - 1$, i.e., the chosen run of size $2$ is not at the boundary. 
In this case, we require $i_{k + 1} = i_k + 1$, i.e., $w'$ has no letters between $w'_{i_k}$ and $w'_{i_{k + 1}}$. 
Then, the word $w$ obtained by adding the letter $p_k$ immediately after $w'_{i_{k + 1}}$ satisfies $c_p(w) = 3$.

\begin{example}
We illustrate the different cases for $p = \one\one\zero\zero\one\one$ using slot diagrams.
\begin{center}
    \begin{tikzpicture}
        \node[inner sep = 0.35cm] (p) at (0, 0) {$\one\one\zero\zero\one\one$};
        \node (1) at (7, 2) {$\underbracket{\quad}_{\zero}\ \textcolor{red}{\one}\ \underbracket{\quad}_{\zero}\ \textcolor{red}{\one\ \one\ }\underbracket{\quad}\ \zero\ \underbracket{\quad}_{\one}\ \zero\ \underbracket{\quad}\ \one\ \underbracket{\quad}_{\zero}\ \one\ \underbracket{\quad}_{\zero}$};
        \node (2) at (7, 0) {$\underbracket{\quad}_{\zero}\ \one\ \underbracket{\quad}_{\zero}\ \one\ \underbracket{\quad}\ \textcolor{red}{\zero\ \zero\ \zero}\ \underbracket{\quad}\ \one\ \underbracket{\quad}_{\zero}\ \one\ \underbracket{\quad}_{\zero}$};
        \node (3) at (7, -2) {$\underbracket{\quad}_{\zero}\ \one\ \underbracket{\quad}_{\zero}\ \one\ \underbracket{\quad}\ \zero\ \underbracket{\quad}_{\one}\ \zero\ \underbracket{\quad}\ \textcolor{red}{\one\ \one\ }\underbracket{\quad}_{\zero}\ \textcolor{red}{\one\ }\underbracket{\quad}_{\zero}$};
        
        \draw[->] (p) -- (3.5, 1.5);
        \draw[->] (p) -- (3.5, 0);
        \draw[->] (p) -- (3.5, -1.5);
    \end{tikzpicture}
\end{center}
\end{example}

From the above example, we can see that there is an extra slot for runs at the boundary because letters inserted into it cannot be used to obtain more occurrences of $p$.

We now show that any word $w$ such that $c_p(w) = 3$ can be obtained by the methods described above. 
Let $w$ be a binary word of length $n$ such that $c_p(w) = 3$. 
Let $\boldsymbol{i} : i_1 < i_2 < \cdots < i_l$, and similarly $\boldsymbol{h}$ and $\boldsymbol{j}$ be the indices where the occurrences of $p$ in $w$ appear. 
Also, suppose that $\boldsymbol{i} <_{\operatorname{lex}} \boldsymbol{h} <_{\operatorname{lex}} \boldsymbol{j}$ and that $k$ is such that $i_m = j_m$ for all $m < k$ and $i_k < j_k$. 
Here $<_{\operatorname{lex}}$ represents the lexicographic order.

\textbf{Case 1:} We first consider the case when $i_m = j_m$ for all $m > k$. 
Note that since $\boldsymbol{i} <_{\operatorname{lex}} \boldsymbol{h} <_{\operatorname{lex}} \boldsymbol{j}$, we have $h_m = i_m = j_m$ for all $m < k$. 
If $h_k = i_k$, we get a fourth occurrence of $p$ at $j_1 < j_2 < \cdots < j_k < h_{k + 1} < h_{k + 2} < \cdots < h_l$. 
For use in the proof of \Cref{4count}, we also note that this fourth occurrence is lexicographically larger than $\boldsymbol{j}$. 
We get a similar contradiction if $h_k = j_k$. 
Hence, we must have $i_k < h_k < j_k$. 
Using the same logic, we also get that $h_m = i_m = j_m$ for all $m > k$, $p_k$ forms a run of size $1$ in $p$, and $i_k + 1 = h_k = j_k - 1$.

Just as in the proof of \Cref{2count}, we see that deleting $w_{h_k}$ and $w_{j_k}$ results in a word of length $n - 2$ having exactly one occurrence of $p$. 
This shows that $w$ can be constructed using \textbf{Method 3.1}. 
This gives us the first term in the expression for $B_{n, p}(3)$.

\textbf{Case 2:} Next, we assume that there exists some $m > k$ such that $i_m \neq j_m$. 
We first show that $i_{k + 1} = j_k$. 
If $i_{k + 1} > j_k$, then using the occurrences given by $i_1 < \cdots < i_k < j_{k + 1} < \cdots < j_l$ and $j_1 < \cdots < j_k < i_{k + 1} < \cdots < i_l$, we get at least four occurrences of $p$ in $w$. 
Suppose that $i_{k + 1} < j_k$. 
Just as before, the indices $i_1 < i_2 < \cdots < i_k < j_{k + 1} < \cdots < j_l$ give a third occurrence of $p$ in $w$. 
For $i_1 < i_2 < \cdots < i_{k + 1} < j_{k + 2} < \cdots < j_l$ to not be a fourth occurrence, we must have $i_m = j_m$ for all $m > k + 1$. 
Since $l > 2$, we must have either $k > 1$ or $k + 1 < l$. 
We deal with the case $k > 1$ since the other can be dealt with analogously. 
If $p_k = p_{k + 1}$, then $i_1 < \cdots < i_{k - 1} < i_{k + 1} < j_k < \cdots < j_l$ gives a fourth occurrence of $p$ in $w$. 
Hence $p_k \neq p_{k + 1}$ and we show that $p_{k - 1}$ can be neither of them. 
If $p_{k - 1} = p_k$, then $i_1 < \cdots < i_{k - 2} < i_k < j_k < \cdots < j_l$ gives a fourth occurrence of $p$ in $w$. 
We get a similar contradiction if $p_{k - 1} = p_{k + 1}$. 
Hence, we must have $i_{k + 1} = j_k$. 
In particular, this shows that $p_k = p_{k + 1}$.

Note that we have the third occurrence $\boldsymbol{h} = i_1 < \cdots < i_k < j_{k + 1} < \cdots < j_l$ and for there to be no other occurrences of $p$, we must have $i_m = j_m$ for all $m > k + 1$. 
Using similar ideas as before, we can show that $p_kp_{k + 1}$ forms a run of size $2$ in $p$. 
If this run is neither the first nor last run in $p$, then we must have $i_k + 1 = i_{k + 1} = j_k = j_{k + 1} - 1$ and deleting $w_{j_k}$ and $w_{j_{k + 1}}$ gives a word of length $n - 2$ that has exactly one occurrence of $p'$, where $p'$ is the word obtained by deleting $p_{k + 1}$ from $p$. 
If $k = 1$, then $j_{k + 1} = j_k + 1$ and we must have letters not equal to $p_k$ between $w_{i_k}$ and $w_{j_k}$. 
Also, in this case, deleting the letter $w_{j_{k + 1}}$ results in a word of length $n - 1$ that has exactly one occurrence of $p$. 
Analogous statements hold if $k = l - 1$.

This shows that $w$ can be constructed using \textbf{Method 3.2}. 
This gives us the second and third term in the expression for $B_{n, p}(3)$.
\end{proof}

\subsection{The case $k=4$}

Let $p$ be a binary word of length $l$ that has $r$ runs, $r_i$ of which are of size $i$ for each $i \geq 1$. 
Let $r_{3, b}$ be the number of runs of size $3$ in $p$ that are either the first or last run. 
Also, set $r_{(1, 2), b}$ to be $2$ if the first two runs in both $p$ and $p^r$ are of sizes $1$ and $2$ respectively, $1$ if this happens for exactly one of $p$ or $p^r$, and $0$ otherwise. 
For example, if $p = \one\zero\one\one\zero$, then $r_{(1, 2), b} = 1$ since the first two runs of $p^r$ are of sizes $1$ and $2$ respectively.

\begin{proposition}\label{4count}
For any $n \geq 5$, if $l \geq 4$, we have
\begin{align*}
    B_{n, p}(4) &= r_1 \binom{n - r - 2}{l - r + 1} + \binom{r_1}{2}\binom{n - r - 1}{l - r + 1} + (r_3 - r_{3, b}) \binom{n - r - 1}{l - r}\\
    &+ r_{3,b} \binom{n - r}{l - r + 1} + r_{(1, 2), b} \binom{n - r - 2}{l - r}
\end{align*}
and otherwise, we have
\begin{equation*}
    B_{n, p}(4) =
    \begin{cases}
        0 &\text{if $l = 2, r = 1$}\\
        \binom{n}{4} &\text{if $l = 1\text{ or }3, r = 1$}\\
        (n - 4)^2 &\text{if $l = 3, r = 2$}\\
        5n - 19 &\text{if $l = 2, r = 2$}\\
        7n - 31 &\text{if $l = 3, r = 3$.}\\
    \end{cases}
\end{equation*}
\end{proposition}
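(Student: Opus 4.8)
The plan is to follow the same template used for $k=2$ and $k=3$: first exhibit a list of constructions that each take a word with one occurrence of $p$ (and suitable length) and produce a word with exactly $4$ occurrences, then prove that every word $w$ with $c_p(w)=4$ arises from exactly one of these constructions, and finally count each family using \Cref{1count}. The five terms in the formula for $l\geq 4$ correspond to five construction types. Three of them are the ``$k=4$ analogues'' of the methods already seen: (Method 4.1) pick a run of size $1$, say $p_k$, and insert \emph{three} copies of $p_k$ immediately after $w'_{i_k}$ in a length-$(n-3)$ word $w'$ with $c_p(w')=1$; (Method 4.2) pick a run of size $3$, say $p_kp_{k+1}p_{k+2}$, and insert one extra copy of $p_k$, with the same boundary/non-boundary case split as in \textbf{Method 3.2} (giving the $(r_3-r_{3,b})$ and $r_{3,b}$ terms with the appropriate shift of binomial coefficients); and (Method 4.3) pick \emph{two distinct} runs of size $1$, say $p_{k}$ and $p_{k'}$ with $k<k'$, and insert one extra copy of $p_k$ after $w'_{i_k}$ and one extra copy of $p_{k'}$ after $w'_{i_{k'}}$ in a length-$(n-2)$ word with $c_p(w')=1$ — this contributes $\binom{r_1}{2}\binom{n-r-1}{l-r+1}$. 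The genuinely new phenomenon is the last term $r_{(1,2),b}\binom{n-r-2}{l-r}$: when the first two runs of $p$ are $\one\zero\zero$-type (size $1$ then size $2$), one can combine a ``boundary size-$2$'' type insertion with a ``size-$1$'' type insertion \emph{involving the same initial run}, and the interaction produces $4$ rather than $3$ occurrences; (Method 4.4) handles this, and symmetrically the mirror image at the last run, which is why $r_{(1,2),b}\in\{0,1,2\}$.

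For the converse, I would let $w$ have occurrences $\boldsymbol a^{(1)}<_{\operatorname{lex}}\boldsymbol a^{(2)}<_{\operatorname{lex}}\boldsymbol a^{(3)}<_{\operatorname{lex}}\boldsymbol a^{(4)}$ and reason about the ``branch points'' — the first coordinates at which these sequences diverge — exactly as in the proof of \Cref{3count}, but now tracking a binary tree of divergences rather than a single one. The key structural claim is that the set of four occurrences must have one of a small number of shapes: either all four agree except in one coordinate $k$ (forcing $p_k$ to be a run of size $1$ and the four choices to be four consecutive positions, hence $r_1$ ways, Method 4.1 after a shift by $3$ in the length); or they split into two pairs along one coordinate and within each pair along another (two distinct size-$1$ runs, Method 4.3); or they split ``$1+3$'' along a coordinate $k$ with the block of three behaving as in Case 2 of \Cref{3count} (a size-$2$ or — after one more insertion — size-$3$ run, which after careful bookkeeping collapses to the $r_3$ terms); or finally the boundary-interaction shape handled by Method 4.4. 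The lexicographic-largeness remark inserted into the proof of \Cref{3count} (``this fourth occurrence is lexicographically larger than $\boldsymbol j$'') is exactly the tool that lets me rule out the spurious extra occurrences when I glue two $k=3$-type moves together, so I would lean on it heavily here.

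For the small cases ($l\leq 3$, or $l=2$), I would just argue directly: for $l=1$ or $l=3$ with $r=1$, i.e.\ $p=\one^l$, a word has $c_p(w)=\binom{m}{l}$ where $m$ is its number of $\one$'s, so $c_p(w)=4$ forces $\binom m l =4$, impossible for $l=3$ unless... — wait, $\binom{m}{3}=4$ has no integer solution, so actually the $l=3,r=1$ case needs $c_p(w)=4$ via some $m$ with $\binom m3=4$, which fails; the correct reading is that for $l=1$, $\binom m1=4\Rightarrow m=4$ and the count is $\binom n4$, while for $l=3,r=1$ the stated answer $\binom n4$ must come from a different combinatorial identity that I would verify by the same run-insertion bookkeeping specialized to $p=\one\one\one$. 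The remaining finite cases ($p=\one\zero$, $p\in\{\one\one\zero,\one\zero\zero,\one\zero\one\}$) I would settle by explicitly listing the forms of the extremal words, as was done for the $3$ occurrences of $\one\zero$ in \Cref{3count}. The main obstacle, and where most of the work lies, is the converse direction in the case $l\geq 4$: proving that the four ``interaction'' shapes above are \emph{exhaustive} and mutually exclusive, and in particular correctly isolating the boundary phenomenon so that it is counted once by $r_{(1,2),b}$ and not double-counted against the size-$1$ or size-$2$ terms. I expect this to require a somewhat delicate case analysis on the relative positions of the branch points, analogous to but strictly longer than Case 2 of \Cref{3count}.
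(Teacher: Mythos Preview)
Your plan is essentially the paper's: the same five construction families (your Methods 4.1--4.4 coincide with the paper's, with your 4.2 and 4.3 swapped), followed by a converse case analysis and a count via \Cref{1count}. Two points deserve correction. First, for $l=3$, $r=1$ you claim $\binom{m}{3}=4$ has no integer solution, but $\binom{4}{3}=4$; so the $\binom{n}{4}$ answer for $p=\one\one\one$ falls out immediately by the same argument as for $l=1$, and no ``different combinatorial identity'' is needed. Second, your converse is framed around all four occurrences and a ``binary tree of divergences,'' which is more machinery than required: the paper only looks at the lexicographically smallest and largest occurrences $\boldsymbol{i}$ and $\boldsymbol{j}$, lets $k$ be the first index where they differ and $s$ the next, and then splits into the cases $j_k<i_s$, $j_k=i_s$, $j_k>i_s$. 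These three cases (together with the easy Case~1 where $s$ does not exist) correspond cleanly to your Method~4.3, your Method~4.2/4.4, and your Method~4.4 respectively, with the $r_3$ versus $r_{(1,2),b}$ distinction inside the $j_k=i_s$ case decided by whether $p_{k+2}=p_{k+1}$. Reorganizing your converse this way will spare you the delicate exhaustiveness/disjointness argument you anticipate.
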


\begin{proof}
We first consider $l < 4$. 
The results for $r = 1$ are straightforward. 
For the other cases, we state, without proof, the form of the words that contain exactly $4$ occurrences of $p$. 
The expressions for $B_{n, p}(4)$ can be derived from this description. 

If $l = 3$ and $r = 2$, we can assume that $p = \one\zero\zero$ using trivial equivalences. 
Any word that contains exactly $4$ occurrences of $p$ must be of the form
\begin{equation*}
    \zero^a \one \zero \one \zero \one^b \zero \one^c \quad \text{or} \quad \zero^a \one \one \one \one \zero \one^b \zero \one^c
\end{equation*}
for some $a, b, c \geq 0$.

If $l = r = 2$, we can assume $p = \one \zero$. 
Any word that contains exactly $4$ occurrences of $p$ must be of the form
\begin{equation*}
    \zero^a \one \one \zero \zero \one^b \quad \text{or}\quad \zero^a \one \zero \zero \one \zero \one^b\quad \text{or}\quad \zero^a \one \zero \zero \zero \zero \one^b \quad \text{or}\quad  \zero^a \one \zero \one \one \zero \one^b \quad \text{or}\quad  \zero^a \one \one \one \one \zero \one^b
\end{equation*}
for some $a, b \geq 0$.

If $l = r = 3$, we can assume $p = \one \zero \one$. 
Any word that contains exactly $4$ occurrences of $p$ must be of the form
\begin{gather*}
    \zero^a \one \zero \zero \one \one \zero^b \quad\text{or}\quad \zero^a \one \zero \one \zero \one \zero^b \quad\text{or}\quad \zero^a \one \one \zero \zero \one \zero^b \quad\text{or}\quad \zero^a \one \one \zero \one \one \zero^b \quad\text{or}\\
    \zero^a \one \zero \zero \zero \zero \one \zero^b \quad\text{or}\quad
    \zero^a \one \zero \one \one \one \one \zero^b \quad\text{or}\quad
    \zero^a \one \one \one \one \zero \one \zero^b
\end{gather*}
for some $a, b \geq 0$.

We now consider the case $l \geq 4$. 
Just as in the previous proofs, we first describe methods to construct words that have $3$ occurrences of $p$. 
We then show that all such words are obtain from these methods.

\textbf{Method 4.1:}
This method is very similar to \textbf{Method 3.1} in \Cref{3count}. 
We illustrate it in the example below.

\begin{example}
Let $p = \one\one\zero\one\zero\zero$. 
We express an instance of this method using a slot diagram. 
We choose $p_3 = \zero$ as the run of size $1$.
$$\one\ \one\ \textcolor{red}{\zero}\ \one\ \zero\ \zero$$
$$\downarrow$$
\begin{equation*}
    \underbracket{\quad}_{\zero}\ \one\ \underbracket{\quad}_{\zero}\ \one\ \underbracket{\quad}\ \textcolor{red}{\zero\ \zero\ \zero\ \zero}\ \underbracket{\quad}\ \one\ \underbracket{\quad}\ \zero\ \underbracket{\quad}_{\one}\ \zero\ \underbracket{\quad}_{\one}
\end{equation*}
Inserting appropriate letters into slots results in words that have $4$ occurrences of $p$.
\end{example}

\textbf{Method 4.2:}
Suppose $p_k$ and $p_s$ for $1 \leq k < s \leq l$ are runs of size $1$ in $p$. 
Let $w'$ be a word that has exactly one occurrence of $p$ which is at indices $i_1 < i_2 < \cdots < i_l$. 
Construct a word $w$ by adding a copy of the letter $p_k$ immediately after $w'_{i_k}$ and a copy of the letter $p_s$ immediately after $w'_{i_s}$. 
Then $w$ satisfies $c_p(w) = 4$.

\begin{example}
Let $p = \one\one\zero\one\zero\zero\one$. 
We express an instance of this method using a slot diagram. 
We choose $p_3 = \zero$ and $p_7 = \one$ as the runs of size $1$.
$$\one\ \one\ \textcolor{red}{\zero}\ \one\ \zero\ \zero\ \textcolor{red}{\one}$$
$$\downarrow$$
\begin{equation*}
    \underbracket{\quad}_{\zero}\ \one\ \underbracket{\quad}_{\zero}\ \one\ \underbracket{\quad}\ \textcolor{red}{\zero\ \zero}\ \underbracket{\quad}\ \one\ \underbracket{\quad}\ \zero\ \underbracket{\quad}_{\one}\ \zero\ \underbracket{\quad}\ \textcolor{red}{\one\ \one}\ \underbracket{\quad}_{\zero}
\end{equation*}
\end{example}

\textbf{Method 4.3:}
This method is very similar to \textbf{Method 3.2} in \Cref{3count}. 
We illustrate it in the example below.

\begin{example}
We illustrate the method for boundary and non-boundary runs of size $3$ for $p = \one\one\one\zero\zero\zero\one\one\one$ using slot diagrams.
\begin{center}
    \begin{tikzpicture}
        \node[inner sep = 0.35cm] (p) at (0, 0) {$\one\one\one\zero\zero\zero\one\one\one$};
        \node (1) at (7, 2) {$\underbracket{\quad}_{\zero}\ \textcolor{red}{\one}\ \underbracket{\quad}_{\zero}\ \textcolor{red}{\one}\ \underbracket{\quad}_{\zero}\ \textcolor{red}{\one\ \one\ }\underbracket{\quad}\ \zero\ \underbracket{\quad}_{\one}\ \zero\ \ \underbracket{\quad}_{\one}\ \zero\ \underbracket{\quad}\ \one\ \underbracket{\quad}_{\zero}\ \one\ \underbracket{\quad}_{\zero}\ \one\ \underbracket{\quad}_{\zero}$};
        \node (2) at (7, 0) {$\underbracket{\quad}_{\zero}\ \one\ \underbracket{\quad}_{\zero}\ \one\ \underbracket{\quad}_{\zero}\ \one\ \underbracket{\quad}\ \textcolor{red}{\zero\ \zero}\ \underbracket{\quad}_{\one}\ \textcolor{red}{\zero\ \zero}\ \underbracket{\quad}\ \one\ \underbracket{\quad}_{\zero}\ \one\ \underbracket{\quad}_{\zero}\ \one\ \underbracket{\quad}_{\zero}$};
        \node (3) at (7, -2) {$\underbracket{\quad}_{\zero}\ \one\ \underbracket{\quad}_{\zero}\ \one\ \underbracket{\quad}_{\zero}\ \one\ \underbracket{\quad}\ \zero\ \underbracket{\quad}_{\one}\ \zero\ \ \underbracket{\quad}_{\one}\ \zero\ \underbracket{\quad}\ \textcolor{red}{\one\ \one\ }\underbracket{\quad}_{\zero}\ \textcolor{red}{\one\ }\underbracket{\quad}_{\zero}\ \textcolor{red}{\one\ }\underbracket{\quad}_{\zero}$};
        
        \draw[->] (p) -- (2, 1.25);
        \draw[->] (p) -- (2, 0);
        \draw[->] (p) -- (2, -1.25);
    \end{tikzpicture}
\end{center}    
\end{example}

\textbf{Method 4.4:}
Suppose the first run of $p$ is of size $1$ and the second is of size $2$. 
Without loss of generality, let the first letter of $p$ be $\one$. 
Let $p'$ be the word obtained by deleting the first $3$ letters of $p$. 
Suppose that $w'$ is a word that has exactly one occurrence of $p'$ which is at indices $i_1 < i_2 < \cdots < i_{l - 3}$. 
Then the word $w$ obtained by inserting the letters $\one\zero\one\zero\zero$ in $w'$ immediately before $w'_{i_1}$ satisfies $c_p(w) = 4$. 
A similar method can be used to construct words with $4$ occurrences of $p$ if the first two runs of $p^r$ are of sizes $1$ and $2$ respectively.

\begin{example}
    We illustrate an instance of both cases of this method using $p = \one\zero\zero\one\one\zero$. 
    The slot diagram corresponding to using the first two runs is as follows. 

    $$\textcolor{red}{\one\ \zero\ \zero}\ \one\ \one\ \zero$$
    $$\downarrow$$
    \begin{equation*}
    \underbracket{\quad}_{\zero}\ \textcolor{red}{\one\ \zero\ \one\ \zero\ \zero}\ \underbracket{\quad}\ \one\ \underbracket{\quad}_{\zero}\ \one\ \underbracket{\quad}\ \zero\ \underbracket{\quad}_{\one}
    \end{equation*}
    The one corresponding to using the last two runs is as follows. 
    $$\one\ \zero\ \zero\ \textcolor{red}{\one\ \one\ \zero}$$
    $$\downarrow$$
    \begin{equation*}
        \underbracket{\quad}_{\zero}\ \one\ \underbracket{\quad}\ \zero\ \underbracket{\quad}_{\one}\ \zero\ \underbracket{\quad}\ \textcolor{red}{\one\ \one\ \zero\ \one\ \zero}\ \underbracket{\quad}_{\one}
    \end{equation*}
\end{example}

We now show that any word $w$ such that $c_p(w) = 4$ can be obtained by the methods described above. 
Let $w$ be a binary word of length $n$ such that $c_p(w) = 4$. 
Let $\boldsymbol{i}: i_1 < i_2 < \cdots < i_l$ be the lexicographically smallest occurrence of $p$ in $w$ and $\boldsymbol{j}$ be the largest. 
Let $k \in [l]$ be the smallest such that $i_k < j_k$. 
Note that $i_m = j_m$ for all $m < k$.

\textbf{Case 1:}
We first consider the case when $i_m = j_m$ for all $m > k$. 
This case is very similar to the analogous case in \Cref{3count}. 
This case corresponds to \textbf{Method 4.1} and contributes to the first term in the expression for $B_{n, p}(4)$.

\textbf{Case 2:}
Next, we assume that there exists some $m > k$ such that $i_m \neq j_m$. 
Let $s \in [k + 1, l]$ be the smallest such $m$.

\textbf{Case 2(a):}
Suppose $j_k < i_s$. 
We show that this case corresponds to \textbf{Method 4.2}. 
Note that the four occurrences of $p$ in $w$ are as follows:
\begin{itemize}
    \item $\boldsymbol{i}: i_1 < i_2 < \cdots < i_l$
    \item $i_1 < i_2 < \cdots < i_{s - 1} < j_s < j_{s + 1} < \cdots < j_l$
    \item $j_1 < j_2 < \cdots < j_{s - 1} < i_s < i_{s + 1} < \cdots < i_l$
    \item $\boldsymbol{j}: j_1 < j_2 < \cdots < j_l$
\end{itemize}
For there to be no other occurrences of $p$ in $w$, we must have the following:
\begin{itemize}
    \item $i_m = j_m$ for all $m \neq k, s$.
    \item $j_k = i_k + 1$ and $j_s = i_s + 1$.
    \item $p_k$ and $p_s$ form runs of size $1$ in $p$.
    \item Deleting $w_{j_k}$ and $w_{j_s}$ results in a word having exactly one occurrence of $p$.
\end{itemize}

Hence, $w$ can be constructed using \textbf{Method 4.2} and this case corresponds to the second term in the expression for $B_{n, p}(4)$.

\textbf{Case 2(b):}
Suppose $j_k = i_s$. 
Note that in this case we must have $s = k + 1$ and hence $p_k = p_{k + 1}$. 
First we consider the case $p_{k + 1} = p_{k + 2}$ and show that this corresponds to \textbf{Method 4.3}. 
We obtain four occurrences of $p$ in $w$ by taking the indices $i_1 < i_2 < \cdots < i_{k - 1}$ for the first $k - 1$ letters of $p$, any three of the indices $i_k, j_k, j_{k + 1}, j_{k + 2}$ for the next $3$ letters in $p$, and finally the indices $j_{k + 3} < \cdots < j_l$ for the remaining letters of $p$. 
For there to be no other occurrences of $p$, the letters $p_kp_{k + 1}p_{k + 2}$ must form a run of size $3$. 
Hence, $w$ can be constructed using \textbf{Method 4.3} and this case corresponds to the third and fourth term in the expression for $B_{n, p}(4)$.

Next, we assume that $p_{k + 1} \neq p_{k + 2}$. 
Note that if $i_{k + 2} > j_{k + 1}$, then we obtain more than $4$ occurrences of $p$ in $w$. 
Hence, $i_{k + 2} < j_{k + 1}$. 
In fact, the indices $i_{k + 1}, i_{k + 2}, j_{k + 1}, j_{k + 2}$ must be consecutive. 
We can also check that we must have $k + 2 = l$. 
Since $l \geq 4$, $k \neq 1$ and we can check that the letter $p_{k - 1} \neq p_k$ and this implies that $i_k + 1 = i_{k + 1}$. 
Also, deleting the letters $w_{i_k}, w_{i_k + 1}, \ldots, w_{i_k + 4}$ results in a word that has exactly one occurrence of $p'$. 
Here $p'$ is the word obtained from $p$ by deleting its last three letters. 
This shows that $w$ can be constructed using \textbf{Method 4.4} using the last two runs of $p$. 
Hence, this case accounts for the last term in the expression for $B_{n, p}(4)$ corresponding to when $p^r$ has first two runs of sizes $1$ and $2$ respectively.

\textbf{Case 2(c):}
Finally we assume that $j_k > i_s$. 
In this case as well, we must have $s = k + 1$. 
If $p_k = p_{k + 1}$, we obtain more than $4$ occurrences of $p$ in $w$ by using the indices $i_1 < i_2 < \cdots < i_{k - 1}$ for the first $k - 1$ letters of $p$, any two of $i_k, i_{k + 1}, j_k, j_{k + 1}$ for the next two letters, and $j_{k + 2} < \cdots < j_l$ for the remaining letters. 
Hence $p_k \neq p_{k + 1}$. 
We can also show that $p_{k + 2} = p_{k + 1}$ using the fact that $l \geq 4$. 
Similar to the previous case, we get that $k = 1$, the indices $i_k, i_{k + 1}, j_k, j_{k + 1}, j_{k + 2}$ are consecutive, and that deleting $w_{i_k}, w_{i_k + 1}, \ldots , w_{i_k + 4}$ results in a word with one occurrence of $p'$. 
Here $p'$ is the word obtained by removing the first three letters from $p$. 
This shows that $w$ can be constructed using \textbf{Method 4.4} using the first two runs of $p$. 
Hence, this case accounts for the last term in the expression for $B_{n, p}(4)$ corresponding to when $p$ has first two runs of sizes $1$ and $2$ respectively.
\end{proof}

\section{Maximum occurrences and internal zeroes}\label{maxoccandintzerosec}

Given $n \geq 0$ and a binary word $p$, we set $M_{n, p}$ to be maximum possible number of occurrences of $p$ in a binary word of length $n$. 
Hence,
\begin{align*}
    M_{n, p} &= \operatorname{max}\{c_p(w) \mid w \in \{\texttt{0}, \texttt{1}\}^n\}\\[0.2cm]
    &= \operatorname{max}\{k \mid B_{n, p}(k) \neq 0\}.
\end{align*}
A binary word $w$ of length $n$ is said to be \emph{$p$-optimal} if $c_p(w) = M_{n, p}$.

\begin{proposition}\label{maxform}
Let \textup{$p = \texttt{1}^i \texttt{0}^j \texttt{1}^k$} for some $i, k \geq 0$ and $j \geq 1$. 
For any $n \geq i + j + k$, we have
\begin{equation*}
    M_{n, p} = \operatorname{max}\left\{\binom{a}{i}\binom{b}{j}\binom{c}{k} \mid a + b + c = n\right\}.
\end{equation*}
\end{proposition}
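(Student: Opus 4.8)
The plan is to prove separately that $M_{n,p}$ is at least, and at most, the displayed maximum; the lower bound comes from an explicit construction, the upper bound from a direct counting estimate on an arbitrary word, and the latter is where the real content sits.

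For the lower bound I would simply observe that the word $w=\texttt{1}^a\texttt{0}^b\texttt{1}^c$, for any $a,b,c\ge 0$ with $a+b+c=n$, has exactly $\binom{a}{i}\binom{b}{j}\binom{c}{k}$ occurrences of $p=\texttt{1}^i\texttt{0}^j\texttt{1}^c$ (read $p=\texttt{1}^i\texttt{0}^j\texttt{1}^k$): the $j$ middle letters of an occurrence must be drawn from the block $\texttt{0}^b$, since those are the only zeros of $w$, which then forces the $i$ leading ones into the prefix $\texttt{1}^a$ and the $k$ trailing ones into the suffix $\texttt{1}^c$, and conversely every such choice is an occurrence. (Here and below $\binom{x}{y}=0$ when $x<y$.) Maximizing over $(a,b,c)$ yields $M_{n,p}\ge\max\{\binom{a}{i}\binom{b}{j}\binom{c}{k}\mid a+b+c=n\}$.

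For the upper bound, the key step is a formula expressing $c_p(w)$, for an arbitrary word $w$, through its zeros. Write $z$ for the number of zeros of $w$ and $T=n-z$ for the number of ones, and for a position $x$ let $L(x),R(x)$ be the number of ones of $w$ strictly to the left, resp.\ to the right, of $x$, so that $L(x)+R(x)=T$ whenever $w_x=\texttt{0}$. Grouping the occurrences of $p$ according to the $j$ positions $b_1<\cdots<b_j$ that carry its middle block $\texttt{0}^j$, the leading $\texttt{1}^i$ can be completed in $\binom{L(b_1)}{i}$ ways and the trailing $\texttt{1}^k$ in $\binom{R(b_j)}{k}$ ways, independently and from disjoint ranges of positions, so
\[
c_p(w)=\sum_{b_1<\cdots<b_j}\binom{L(b_1)}{i}\binom{R(b_j)}{k},
\]
the sum running over all $j$-element subsets of the set of zeros of $w$. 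Since $b_1\le b_j$ we have $L(b_1)\le L(b_j)$, hence $R(b_j)=T-L(b_j)\le T-L(b_1)$, so each summand is at most $\binom{L(b_1)}{i}\binom{T-L(b_1)}{k}\le G$, where $G:=\max_{0\le g\le T}\binom{g}{i}\binom{T-g}{k}$. As the sum has $\binom{z}{j}$ summands, $c_p(w)\le\binom{z}{j}G$. Writing $G=\binom{a_0}{i}\binom{T-a_0}{k}$ for some $a_0\in[0,T]$ and putting $(a,b,c)=(a_0,z,T-a_0)$, which satisfies $a+b+c=n$, this reads $c_p(w)\le\binom{a}{i}\binom{b}{j}\binom{c}{k}\le\max\{\binom{a}{i}\binom{b}{j}\binom{c}{k}\mid a+b+c=n\}$, and taking the maximum over $w$ finishes the proof.

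I expect the only real ingredient to be spotting the displayed identity together with the observation that, because $L$ is non-decreasing and $R$ non-increasing along the word, the whole sum is dominated term by term by the contribution of a single word of the form $\texttt{1}^a\texttt{0}^b\texttt{1}^c$; everything else is bookkeeping. The points deserving a line of care are the degenerate cases -- $i=0$ or $k=0$, where the corresponding binomial factor is identically $1$ and the argument is unaffected; $z<j$, where the sum is empty and $c_p(w)=0$; and $T=0$ -- and the remark that $a_0$ may be chosen in $[0,T]$ so that $(a,b,c)$ is an admissible triple. None of these constitutes a genuine obstacle.
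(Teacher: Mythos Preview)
Your proof is correct and follows essentially the same approach as the paper: both compute the lower bound from the explicit word $\texttt{1}^a\texttt{0}^b\texttt{1}^c$, and for the upper bound both group occurrences of $p$ by their middle $\texttt{0}^j$-block and bound each contribution uniformly by the best choice of $(a,c)$ with $a+c$ equal to the number of ones. The only cosmetic difference is that the paper phrases the upper bound constructively (exhibiting a three-run word $w'$ with $c_p(w')\ge c_p(w)$), whereas you bound $c_p(w)$ numerically by $\binom{z}{j}G$; the content is identical.
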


\begin{proof}
Note that the number of occurrences of $p$ in the binary word $\texttt{1}^a\texttt{0}^b\texttt{1}^c$ is $\binom{a}{i}\binom{b}{j}\binom{c}{k}$. 
We prove the result by showing that for any binary word $w$ of length $n$, there exists a binary word $w' = \texttt{1}^a\texttt{0}^b\texttt{1}^c$ of length $n$ such that $c_p(w') \geq c_p(w)$.

Let $w$ be a binary word of length $n$. 
Each occurrence of $\texttt{0}^j$ in $w$ contributes $\binom{a}{i}\binom{c}{k}$ to the number of occurrences of $p$ where
\begin{itemize}
    \item $a$ is the number of $\texttt{1}$s in $w$ before the first letter in the occurrence of $\texttt{0}^j$ and
    \item $b$ is the number of $\one$s in $w$ after the last letter in the occurrence.
\end{itemize}
Consider an occurrence of $\texttt{0}^j$ that makes the maximum contribution among all occurrences of $\texttt{0}^j$ in $w$. 
Suppose its contribution is $\binom{a}{i}\binom{c}{k}$ when written in the form mentioned above.

Let $w'$ be the binary word $\texttt{1}^a\texttt{0}^b\texttt{1}^c$ where $b = n - a - c$. 
Note that there are at least as many occurrences of $\texttt{0}^j$ in $w'$ as there are in $w$ since it has at least as many $\texttt{0}$s. 
Also, each occurrence of $\texttt{0}^j$ in $w'$ contributes $\binom{a}{i}\binom{c}{k}$ to the number of occurrences of $p$. 
This shows that $c_p(w') \geq c_p(w)$ and hence proves the required result.
\end{proof}

Any binary word with at most $3$ runs is trivially equivalent to a word of the form mentioned in the above proposition. 
This gives us the following result, which can be viewed as an analogue of \cite[Theorem 3.1]{freqseq} for binary words.

\begin{corollary}\label{layeranalogue}
Let $p$ be a binary word of length $l$ that has $r$ runs where $r \leq 3$. 
For any $n \geq l$, there exists a $p$-optimal binary word of length $n$ that has $r$ runs.
\end{corollary}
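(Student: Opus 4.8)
The plan is to reduce, via trivial equivalence, to the situation of \Cref{maxform}, and then to exhibit a $p$-optimal word of the special form $\one^a\zero^b\one^c$ that happens to have exactly $r$ runs.

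First I would observe that reversal and complementation each preserve the number of runs of a word, and (as stated in \Cref{prelim}) leave the numbers $B_{n, p}(k)$ — hence $M_{n, p}$ — unchanged; since, as noted just before the statement, every binary word with at most $3$ runs is trivially equivalent to some $p = \one^i\zero^j\one^k$ with $i, k \geq 0$ and $j \geq 1$, I may assume $p$ has this form. Note that $r = 3$ when $i, k \geq 1$, that $r = 2$ when exactly one of $i, k$ is zero, and that $r = 1$ when $i = k = 0$; in every case the middle run $\zero^j$ is present because $j \geq 1$.

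Next I would invoke \Cref{maxform}, which gives $M_{n, p} = \max\{\binom{a}{i}\binom{b}{j}\binom{c}{k} \mid a + b + c = n\}$ over nonnegative integers $a,b,c$. Since $n \geq l = i + j + k$, the triple $(a, b, c) = (n-j-k, j, k)$ is admissible and, as $n - j - k \geq i$, yields a product at least $1$; hence $M_{n, p} \geq 1$. Consequently, for any triple $(a, b, c)$ attaining the maximum, all three binomial coefficients are nonzero, so $a \geq i$, $b \geq j \geq 1$, and $c \geq k$.

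Finally, among all maximizing triples I would pick one with $a + c$ as small as possible. If $i = 0$ but $a \geq 1$, then replacing $(a, b, c)$ by $(a - 1, b + 1, c)$ keeps the sum equal to $n$, does not decrease the product (as $\binom{a}{0} = \binom{a-1}{0} = 1$ and $\binom{b+1}{j} \geq \binom{b}{j}$), and decreases $a + c$ — contradicting minimality; so $i = 0$ forces $a = 0$, and symmetrically $k = 0$ forces $c = 0$. Then $w' = \one^a\zero^b\one^c$ satisfies $c_p(w') = \binom{a}{i}\binom{b}{j}\binom{c}{k} = M_{n, p}$, so it is $p$-optimal, and its runs are exactly the nonempty blocks among $\one^a$, $\zero^b$, $\one^c$: the block $\zero^b$ always (since $b \geq 1$), together with $\one^a$ precisely when $i \geq 1$ (whence $a \geq i \geq 1$) and $\one^c$ precisely when $k \geq 1$. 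Thus $w'$ has exactly $r$ runs. The only delicate point — the one I would be most careful about — is ruling out a maximizer that carries weight on a ``phantom'' block (an $\one^a$ with $a > 0$ while $i = 0$, which would create a spurious extra run); this is exactly what the minimality of $a + c$ achieves, while the hypotheses $j \geq 1$ and $n \geq l$ ensure the middle run is genuinely present.
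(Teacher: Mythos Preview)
Your argument is correct and follows the same route the paper intends: reduce by trivial equivalence to $p=\one^i\zero^j\one^k$ and read off an optimal word of shape $\one^a\zero^b\one^c$ from \Cref{maxform}. The paper states this as an immediate corollary without further comment; your added care --- choosing among maximizers one with $a+c$ minimal to force $a=0$ when $i=0$ (and likewise for $c,k$) --- cleanly handles the only nontrivial detail, namely that the exhibited optimal word has \emph{exactly} $r$ runs rather than possibly more.
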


In fact, if $p$ has $2$ runs, then these are the only $p$-optimal binary words. 
We leave the proof of this as an easy exercise.

\begin{result}\label{2runoptform}
If $p$ is a binary word with $2$ runs, then any $p$-optimal binary word also has $2$ runs.
\end{result}

This result does not extend to words with $3$ runs. 
For example, it can be checked that for $p = \one\one\zero\one$, the word $\one\one\one\zero\one\zero\one$ is $p$-optimal. 
However, an analogue of the above result does hold for certain binary words with $3$ runs (see \Cref{3runallopt3run}).

We note that \Cref{layeranalogue} does not hold for binary words with $4$ or more runs. 
In particular, we have the following: 
Let $A_l$ denote the alternating binary word of length $l$ starting with $\texttt{1}$, i.e., $A_l$ starts with $\texttt{1}$ and has all runs of size $1$. 
For example, $A_4 = \texttt{1010}$, $A_5 = \texttt{10101}$. 
It can be shown that if $l \geq 4$, the only $A_l$-optimal binary word of length $l + 2$ is $A_{l + 2}$.

\begin{definition}
A binary word $p$ is said to have an \emph{internal zero} at $n$ if the sequence $(B_{n, p}(k))_{k \geq 0}$ has an internal zero. 
This means that there exist $0 \leq k_1 < k_2 < k_3$ such that $B_{n, p}(k_1), B_{n, p}(k_3) \neq 0$ but $B_{n, p}(k_2) = 0$.
\end{definition}

Note that if $p$ is of length $l$, then it cannot have an internal zero at any $n \leq l$. 
It is easy to see that for any $n \geq l + 1$, we have $B_{n, p}(0), B_{n, p}(1) \neq 0$ (see \Cref{0count,1count}). 
Hence, to check if $p$ has an internal zero at $n$, we only need to find $2 \leq k_2 < k_3$ such that $B_{n, p}(k_2) = 0$ and $B_{n, p}(k_3) \neq 0$.

\begin{proposition}
Let $p$ be a binary word of length $l$ all of whose runs are of size at least $2$. 
Then $p$ has an internal zero at $n$ for all $n \geq l + 1$.
\end{proposition}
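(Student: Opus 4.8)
The plan is to use the remark immediately preceding the statement: since $B_{n, p}(0) \neq 0$ for every $n$ by \Cref{0count}, it suffices to exhibit integers $2 \le k_2 < k_3$ with $B_{n, p}(k_2) = 0$ and $B_{n, p}(k_3) \neq 0$. I will take $k_2 = 2$ and $k_3 = M_{n, p}$. For the first of these, note that since every run of $p$ has size at least $2$ we have $l \ge 2$ and $r_1 = 0$ (there is no run of size $1$). Feeding $r_1 = 0$ into the $l \ge 2$ branch of \Cref{2count} gives $B_{n, p}(2) = r_1 \binom{n - r}{l - r + 1} = 0$ for all $n$, in particular for all $n \ge l + 1$.

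Next I would show $M_{n, p} \ge 3$ for every $n \ge l + 1$. Pick any run of $p$ and call its size $m$; by hypothesis $m \ge 2$. By \Cref{l+1} we have $B_{l + 1, p}(m + 1) = r_m \ge 1$, so some binary word of length $l + 1$ has exactly $m + 1 \ge 3$ occurrences of $p$, and hence $M_{l + 1, p} \ge 3$. Appending a letter to a word never destroys an occurrence of $p$ (the same index set still works), so $M_{n + 1, p} \ge M_{n, p}$; iterating this from $n = l + 1$ yields $M_{n, p} \ge M_{l + 1, p} \ge 3$ for all $n \ge l + 1$. By definition of $M_{n, p}$ this gives $B_{n, p}(M_{n, p}) \neq 0$ with $M_{n, p} \ge 3$.

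Combining the two steps, for every $n \ge l + 1$ the indices $0 < 2 < M_{n, p}$ witness an internal zero of $(B_{n, p}(k))_{k \ge 0}$, since $B_{n, p}(0) \neq 0$, $B_{n, p}(2) = 0$, and $B_{n, p}(M_{n, p}) \neq 0$; this is exactly the assertion. There is no serious obstacle in this argument — the only points needing a little care are checking that the $l \ge 2$ branch of \Cref{2count} is the relevant one (automatic, since $p$ contains a run of size at least $2$) and the monotonicity $M_{n + 1, p} \ge M_{n, p}$. One could instead avoid $M_{n, p}$ by writing down an explicit witness word of length $n$, say by lengthening a run of $p$ of size $m \ge 2$ up to size $m + (n - l)$, but pinning down its exact occurrence count is slightly more delicate than the monotonicity argument, so I would keep the latter.
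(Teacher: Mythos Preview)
Your proof is correct and follows essentially the same approach as the paper: both use \Cref{2count} (with $r_1 = 0$) to get $B_{n,p}(2) = 0$, and then exhibit a word with at least three occurrences of $p$. The only minor difference is in that second step: the paper writes down the explicit witness $\one^{n-l}p$ (assuming $p$ starts with $\one$), which has exactly $\binom{n-l+i}{i} \ge 3$ occurrences where $i \ge 2$ is the size of the first run, whereas you reach the same conclusion via \Cref{l+1} at $n = l+1$ together with the monotonicity $M_{n+1,p} \ge M_{n,p}$.
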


\begin{proof}
Let $n \geq l + 1$. 
From \Cref{2count}, we see that $B_{n, p}(2) = 0$ since $p$ has no run of size $1$. 
If the first letter of $p$ is $\texttt{1}$ and the first run is of size $i$, then the word $\texttt{1}^{n - l}p$ has $k = \binom{n - l + i}{i}$ occurrences of $p$. 
Since $i \geq 2$, we have $k \geq 3$, and hence $B_{n, p}(k) \neq 0$. 
This shows that $p$ has an internal zero at $n$.
\end{proof}

As an immediate consequence of \Cref{l+1}, we have the following.

\begin{result}
Let $p$ be a binary word of length $l$ with maximum run size $i$. 
Then $p$ does not have an internal zero at $l + 1$ if and only if $p$ has a run of size $j$ for all $j \in [i]$.
\end{result}

In the following results, we characterize the values at which binary words with at most $3$ runs have internal zeroes. We start with the following result for binary words with just one run, which is easy to verify.

\begin{result}
Let $p$ be a binary word with $1$ run. 
If \textup{$p = \texttt{0}\text{ or }\texttt{1}$} then it does not have an internal zero at any $n \geq 0$. 
Otherwise, if $p$ is of length $l \geq 2$, it has an internal zero at all $n \geq l + 1$.
\end{result}

In fact, similar results hold for words with $2$ or $3$ runs as stated in the following theorem, where we say that a binary word is \emph{alternating} if all its runs are of size $1$.

\begin{theorem}\label{IZgenres}
Let $p$ be a binary word of length $l$ with at most $3$ runs.
\begin{itemize}
    \item If $p$ is alternating, then $p$ does not have an internal zero at any $n \geq 7$.
    \item If $p$ is not alternating, then $p$ has an internal zero at all $n \geq l + 3$. 
    In fact, we have $B_{n, p}(M_{n, p} - 1) = 0$.
\end{itemize}
\end{theorem}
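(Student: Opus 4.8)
The plan is to reduce everything, via trivial equivalence, to words of the form $p = \one^i\zero^j\one^k$, since every word with at most $3$ runs is trivially equivalent to such a word; if $r \le 1$ both assertions are immediate from the formula $B_{n,p}(k) = \binom{n}{k}$ and from the earlier Result on $1$-run words, so assume $r \in \{2,3\}$. The alternating sub-case then reduces to $p \in \{\one\zero, \one\zero\one\}$. For $p = \one\zero$ I would exhibit, for each number $a$ of $\one$s, the family $\one^{a-1}\zero^{m}\one\zero^{\,n-a-m}$, whose $c_p$-values sweep out the interval $[(a-1)(n-a),\,a(n-a)]$ as $m$ varies; consecutive such intervals overlap, their union is $[0, M_{n,\one\zero}]$, and since $B_{n,p}(0)\ne 0$ this already rules out internal zeros (indeed for every $n$, not just $n\ge 7$).

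The case $p = \one\zero\one$ is the technical heart of the alternating case: the goal is to show that every value in $[0, M_{n,\one\zero\one}]$ is attained once $n \ge 7$. The key device is the identity
\[
    c_{\one\zero\one}(\one^a M \one^c) \;=\; ac\cdot z(M) \;+\; a\,c_{\zero\one}(M) \;+\; c\,c_{\one\zero}(M) \;+\; c_{\one\zero\one}(M),
\]
where $z(M)$ is the number of $\zero$s in the middle word $M$; this lets one realize prescribed values of $c_{\one\zero\one}$ from shorter middle words with prescribed statistics. Combined with the elementary observation that prepending a $\zero$ leaves $c_{\one\zero\one}$ unchanged — so the set of attained values is nondecreasing in $n$ — this supports an induction on $n$ whose base cases are a finite check, the inductive step filling in the newly available top range $[M_{n-1,p}, M_{n,p}]$ using the identity above with $M$ a short word. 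I expect this bookkeeping to be the fiddliest part of the alternating case, and necessarily so: for $n = 6$ the value $M_{6,\one\zero\one} - 3 = 5$ is skipped, so the threshold $n \ge 7$ is sharp and cannot come from a purely soft argument.

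For the non-alternating case, with $p = \one^i\zero^j\one^k$, $\max(i,j,k)\ge 2$ and $n \ge l+3$, the plan is to prove the stronger statement that any word $w$ of length $n$ with $c_p(w) < M_{n,p}$ satisfies $c_p(w) \le M_{n,p} - 2$, which immediately gives $B_{n,p}(M_{n,p}-1) = 0$. A compression/shifting argument shows that merging all $\zero$s into one block and pushing the $\one$s to the two ends never decreases $c_p$, so it suffices to treat layered words $w = \one^a\zero^b\one^c$ (a non-layered $w$ compresses to a layered word with $c_p$ either strictly below $M_{n,p}$, hence $\le M_{n,p}-2$ by the layered case, or equal to $M_{n,p}$, which forces $w$ to already have been layered). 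For layered words $c_p(w) = \binom ai\binom bj\binom ck =: F(a,b,c)$, whose maximum on $a+b+c=n$ is $M_{n,p}$ by \Cref{maxform}; one then checks $F$ omits $M_{n,p}-1$, because at a maximizer $(a^*,b^*,c^*)$ the hypothesis $n \ge l+3$ forces the coordinates to be large enough that any unit perturbation decreases $F$ by at least a factor which, written out, costs at least $\binom{m-1}{q-1}\ge 2$ for the relevant $m>q\ge 1$, while non-maximizing triples give $F$-values trailing $M_{n,p}$ by an $\Omega(n)$ gap. Carefully shepherding the small instances allowed by $n\ge l+3$ — in particular $l$ as small as $3$ — through these estimates is where the work lies.

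The main obstacle in both cases is extracting a gap of size $\ge 2$ rather than merely $\ge 1$: in the alternating case it is encoded in the exceptional behaviour at $n=6$, so the induction must be based high enough, and in the non-alternating case it comes down to the inequality $\binom{m-1}{q-1}\ge 2$ together with using $n\ge l+3$ to guarantee that the relevant coordinates of an optimal layered word are never so small that this inequality degenerates.
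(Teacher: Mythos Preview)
Your overall scaffolding matches the paper's --- reduce by trivial equivalence to $p=\one^i\zero^j\one^k$, treat the alternating words $\one\zero$ and $\one\zero\one$ separately, and for the non-alternating words show the sharper claim $B_{n,p}(M_{n,p}-1)=0$. But two of the three substantive cases have real gaps.

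\textbf{The non-alternating case.} Your reduction to layered words does not do what you need. Compression in the sense of \Cref{maxform} only produces \emph{some} layered word $w'$ with $c_p(w')\ge c_p(w)$; it does not guarantee a jump of size $\ge 2$. In the branch ``$c_p(w')=M_{n,p}$, which forces $w$ to already have been layered'' you are asserting that a non-layered $w$ cannot compress to an optimal word, but even granting that compression is strict on non-layered words you only get $c_p(w)\le M_{n,p}-1$, precisely the value you are trying to exclude. (And strictness itself fails: the paper notes that $\one\one\one\zero\one\zero\one$ is $\one\one\zero\one$-optimal with five runs, so non-layered optimal words exist.) The paper does not use a one-shot compression; it splits according to whether $j\ge 2$ or $j=1$, bounds the contribution of each occurrence of $\zero^j$ in a word with more than three runs, and uses the inequality $M_{m,\one^i\zero^k}-M_{m-1,\one^i\zero^k}\ge 2$ for $m\ge i+k+2$ (itself proved in \Cref{2runsmaxminus1}) to force the drop of $2$. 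Your claim that ``non-maximizing triples give $F$-values trailing $M_{n,p}$ by an $\Omega(n)$ gap'' is also not what is happening: adjacent layered words differ by products of binomials that need to be shown $\neq 1$ by a case analysis, not by an asymptotic bound.

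\textbf{The case $p=\one\zero\one$.} Your identity for $c_{\one\zero\one}(\one^a M\one^c)$ is correct, and the reduction ``prepend a $\zero$'' to inherit $[0,M_{n-1,p}]$ is exactly the paper's first move. But you do not explain how the identity fills the window $[M_{n-1,p}+1,\,M_{n,p}]$, which has length roughly $m^2$ when $n\approx 3m$, and this is the entire difficulty. The paper's device here is quite different and worth knowing: starting from the optimal word $\one^m\zero^m\one^m$, moving a single $\zero$ from the middle run to the slot after the $i$th $\one$ of the last run decreases $c_{\one\zero\one}$ by exactly $i^2$; moving several $\zero$s decreases it by a sum of squares; and then Lagrange's four-square theorem shows that every $k\in[0,m^2]$ is such a sum once $m\ge 4$. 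Your identity may well be massaged into an alternative proof, but as written it is only a bookkeeping tool, not a mechanism for hitting every value in the window.

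Your treatment of $p=\one\zero$ is fine and in fact cleaner than the paper's swap argument.
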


Moreover, the only non-alternating words $p$ that do not satisfy $B_{n, p}(M_{n, p} - 1) = 0$ for $n = l + 2$ are those trivially equivalent to $\one\one\zero\one$. 
The propositions that follow prove the above theorem.

\begin{proposition}
The binary word \textup{$\texttt{10}$} does not have an internal zero at any $n \geq 0$.
\end{proposition}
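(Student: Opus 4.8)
The goal is to show that for $p = \texttt{10}$, the sequence $(B_{n, \texttt{10}}(k))_{k \geq 0}$ has no internal zeros for any $n$. The plan is to pin down the exact set $\{k : B_{n, \texttt{10}}(k) \neq 0\}$ and verify it is an interval $[0, M_{n, \texttt{10}}]$. First I would recall that a binary word $w$ of length $n$ with exactly $a$ ones (in positions contributing to $\texttt{10}$ patterns) has $c_{\texttt{10}}(w)$ equal to the number of pairs (a one before a zero), which is the classical ``inversions'' statistic; in particular, among words with $a$ ones and $b = n - a$ zeros, the value $c_{\texttt{10}}(w)$ ranges over \emph{every} integer from $0$ (the word $\texttt{0}^b\texttt{1}^a$) up to $ab$ (the word $\texttt{1}^a\texttt{0}^b$), since swapping an adjacent $\texttt{10}$ to $\texttt{01}$ decreases the count by exactly $1$ and one can always reach the sorted word by such swaps. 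This is a well-known fact about the Gaussian binomial / major index, but it is short enough to reprove directly.

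So for each fixed $a \in [0, n]$, the attainable values of $c_{\texttt{10}}$ form the interval $[0, a(n-a)]$. Taking the union over all $a$, the set of attainable values is $\bigcup_{a=0}^{n} [0, a(n-a)] = [0, \max_a a(n-a)] = [0, \lfloor n/2 \rfloor \lceil n/2 \rceil]$, which is exactly $[0, M_{n, \texttt{10}}]$ by \Cref{maxform} with $i = k = 0$, $j = 1$ (or one can just observe directly that $M_{n,\texttt{10}} = \lfloor n^2/4\rfloor$). Since this is a contiguous interval, $B_{n, \texttt{10}}(k) \neq 0$ for all $k$ in it and $= 0$ outside, so there is no internal zero.

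The main steps in order: (1) observe $c_{\texttt{10}}(w)$ depends on $w$ as the number of (one, zero) position pairs, and that for a word with $a$ ones the maximum is $a(n-a)$, attained by $\texttt{1}^a\texttt{0}^{n-a}$; (2) show every intermediate value in $[0, a(n-a)]$ is attained, via the adjacent-transposition argument (each swap of a consecutive $\texttt{10}$ into $\texttt{01}$ drops $c_{\texttt{10}}$ by one, and repeated swaps sort the word to $\texttt{0}^{n-a}\texttt{1}^a$ with count $0$); (3) conclude the global attainable set is the union $\bigcup_a [0, a(n-a)] = [0, M_{n,\texttt{10}}]$, an interval; (4) hence no internal zero. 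The only mild subtlety — really the ``hard part,'' though it is not hard — is step (2): making precise that a single adjacent swap changes $c_{\texttt{10}}$ by exactly $-1$, not more, which follows because such a swap only affects the relative order of the two swapped letters and leaves all other pairs untouched. Everything else is bookkeeping, so the proof should be quite short.
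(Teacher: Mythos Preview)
Your proposal is correct and uses essentially the same idea as the paper: the key step in both is that swapping an adjacent $\texttt{10}$ to $\texttt{01}$ decreases $c_{\texttt{10}}$ by exactly one, so starting from an optimal word one can realize every value down to $0$. Your version stratifies by the number of $\texttt{1}$s and then takes a union of intervals, whereas the paper works directly from a single $\texttt{10}$-optimal word, but this is only an organizational difference.
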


\begin{proof}
A binary word that has at least one occurrence of $\texttt{10}$ must have an occurrence at consecutive indices. 
If not, then the word is of the form $\texttt{0}^a\texttt{1}^b$ for some $a, b \geq 0$ and hence has no occurrences of $\texttt{10}$. 
Let $w$ be a binary word of length $n$ with $w_i = \texttt{1}$ and $w_{i + 1} = \texttt{0}$ for some $i \in [n - 1]$. 
It can be checked that the word obtained by swapping $w_i$ and $w_{i + 1}$ has exactly one less occurrence of $\texttt{10}$. 
This can be used to show that $\texttt{10}$ does not have any internal zeroes by starting with a $\one\zero$-optimal word $w$ and repeatedly swapping consecutive terms of the form $\texttt{10}$.

\begin{example}
    Starting with the word $\one\one\zero\zero$, which is $\one\zero$-optimal, an instance of the above procedure is as follows. 
    \begin{equation*}
        \one\one\zero\zero \rightarrow \one\zero\one\zero \rightarrow \zero\one\one\zero \rightarrow \zero\one\zero\one \rightarrow \zero\zero\one\one
    \end{equation*}
    It can be checked that these words have $4, 3, 2, 1, 0$ occurrences of $\one\zero$ respectively.
\end{example}
\end{proof}

\begin{proposition}\label{2runsmaxminus1}
Let $p$ be a binary word with $2$ runs that is of length $l \geq 3$. 
For all $n \geq l + 2$, we have $B_{n, p}(M_{n, p} - 1) = 0$. 
In particular, $p$ has an internal zero at $n$ for all $n \geq l + 2$.
\end{proposition}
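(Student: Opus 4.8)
The plan is to reduce to the canonical form and then do an explicit optimization. By trivial equivalence we may assume $p = \texttt{1}^i\texttt{0}^j$ with $i + j = l$, $i, j \geq 1$, and (since $l \geq 3$) at least one of $i, j$ is $\geq 2$; say $i \geq 2$ (the case $j \geq 2$ is symmetric via reversal). By \Cref{2runoptform}, every $p$-optimal word has exactly $2$ runs, hence is of the form $\texttt{1}^a\texttt{0}^b$ with $a + b = n$, and has $\binom{a}{i}\binom{b}{j}$ occurrences of $p$; so $M_{n, p} = \max\{\binom{a}{i}\binom{n-a}{j} \mid i \leq a \leq n - j\}$. The goal is to show that the value $M_{n, p} - 1$ is never attained by $c_p(w)$ for any word $w$ of length $n$, once $n \geq l + 2$.

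The first key step is to pin down which words of length $n$ attain a \emph{large} number of occurrences of $p$ — specifically, to show that any word $w$ with $c_p(w) \geq M_{n, p} - 1$ must itself have at most $2$ runs, i.e. be of the form $\texttt{1}^a\texttt{0}^b$. This should follow from a swapping/smoothing argument in the spirit of the proof of \Cref{maxform}: if $w$ has a run structure more complicated than $\texttt{1}^a\texttt{0}^b$, one can rearrange letters to strictly increase the count, and one checks the increase is by more than $1$ when $n \geq l + 2$ (there is enough room); so a word within $1$ of the maximum is already of the canonical form. The second key step is then purely a statement about the integer sequence $f(a) := \binom{a}{i}\binom{n - a}{j}$ for $i \leq a \leq n - j$: namely that its maximum value $M = M_{n,p}$ is attained, but the value $M - 1$ is not attained by any $f(a)$. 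Equivalently, writing $M = f(a^\ast)$ at an optimal $a^\ast$, we must rule out $f(a) = M - 1$ for all $a \neq a^\ast$ in range.

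For that second step I would analyze the ratio $f(a+1)/f(a) = \frac{(a+1)(n-a-j)}{(a+1-i)(n-a)}$ to locate where $f$ increases and decreases (it is unimodal in $a$), identify the optimum $a^\ast$, and then estimate the \emph{gap} $f(a^\ast) - f(a^\ast \pm 1)$ from below, showing it exceeds $1$. Concretely, near the optimum the successive differences of a product of two binomial coefficients grow at least linearly once we are away from the degenerate edge cases, and the hypothesis $n \geq l + 2$ guarantees $a^\ast$ sits strictly inside the range $[i, n-j]$ with at least one unit of slack on the relevant side, forcing $f(a^\ast) - f(a^\ast - 1) \geq 2$ or $f(a^\ast) - f(a^\ast + 1) \geq 2$ (whichever neighbour is in range). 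Combined with unimodality — so that $f$ takes no value between $f(a^\ast \pm 1)$ and $f(a^\ast)$ — this shows $M - 1$ is skipped. Since $B_{n,p}(M_{n,p}) \neq 0$ (the optimum is attained) and $B_{n,p}(0) \neq 0$, the value $M_{n,p}-1 \geq 2$ being unattained exhibits an internal zero.

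The main obstacle is the first step: rigorously showing that \emph{every} word within $1$ of the optimum must collapse to the two-run form $\texttt{1}^a\texttt{0}^b$, rather than just the optimal words themselves (which is \Cref{2runoptform}). One has to track exactly how much a local rearrangement gains and verify it always gains at least $2$ when $n \geq l+2$; handling the small boundary configurations (e.g. a single stray letter in an otherwise canonical word) carefully is where the $n = l + 1$ versus $n \geq l + 2$ distinction really bites, and is presumably also the source of the $\one\one\zero\one$ exception noted after \Cref{IZgenres}. The arithmetic of the second step is routine once the unimodality and the location of $a^\ast$ are written down.
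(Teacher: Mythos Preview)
Your two-step outline matches the paper's in spirit, but the paper runs the steps in the \emph{opposite order}, and that ordering is the key idea you are missing for your ``main obstacle.''

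The paper first handles the two-run case: with $a_k=\binom{k}{i}\binom{n-k}{j}$, it shows directly that $|a_k-a_{k+1}|\neq 1$ for \emph{every} $k$ (not just at the optimum) by writing the equation $|a_k-a_{k+1}|=1$ as an integrality condition and checking the two boundary cases that survive; combined with log-concavity this rules out $M_{n,p}-1$ among two-run words. Only \emph{then} does it treat words with more runs: by repeatedly swapping adjacent $\texttt{01}$ pairs (each swap weakly increases $c_p$), any counterexample $v$ with $c_p(v)=M_{n,p}-1$ can be pushed to a word with exactly one inversion, i.e.\ $v=\texttt{1}^{k-1}\texttt{0}\texttt{1}\texttt{0}^{n-k-1}$. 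Setting $w=\texttt{1}^k\texttt{0}^{n-k}$, one computes $c_p(w)-c_p(v)=\binom{k-1}{i-1}\binom{n-k-1}{j-1}\geq 0$, so $c_p(w)\geq M_{n,p}-1$; but the already-proved two-run case forces $c_p(w)=M_{n,p}$, hence $\binom{k-1}{i-1}\binom{n-k-1}{j-1}=1$, which pins down $k$ and yields a quick contradiction with $n\geq l+2$.

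Your plan instead tries to show in one stroke that any non-two-run word lies at least $2$ below the maximum, before analysing the two-run sequence. That is genuinely harder: a single adjacent swap can gain $0$ or $1$ (e.g.\ for $p=\texttt{1}\texttt{1}\texttt{0}$, swapping the first two letters of $\texttt{0}\texttt{1}\texttt{1}\texttt{0}\texttt{0}$ gains nothing), so ``one checks the increase is by more than $1$'' does not go through swap-by-swap. You would need some aggregate rearrangement argument, and the natural one is exactly the paper's: reduce to one inversion and then \emph{use} the two-run result. For your second step, bounding the gap only at $a^\ast$ is fine, but you need \emph{both} neighbours (your ``or \ldots\ whichever is in range'' is misleading when both are in range), and the paper's device of showing every consecutive difference is $\neq 1$ is a cleaner way to package the same unimodality argument.
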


\begin{proof}
Using trivial equivalences, we can assume that $p = \texttt{1}^i\texttt{0}^j$ for some $i \geq j$ and $i \geq 2$. 
Suppose $n \geq l + 2$. 
We first show that no word of the form $\texttt{1}^k\texttt{0}^{n - k}$ for $k \in [i, n - j]$ can have exactly $M_{n, p} - 1$ occurrences of $p$. 
Set $a_k = \binom{k}{i}\binom{n - k}{j}$ for all $k \in [i, n - j]$. 
Note that the sequence $a_i, a_{i + 1}, \ldots, a_{n - j}$ is unimodal since it is log-concave. 
By \Cref{maxform}, we have $M_{n, p} = \operatorname{max}\{a_k \mid k \in [i, n - j]\}$. 
Hence, we can prove our result by showing that $|a_k - a_{k + 1}| \neq 1$ for any $k \in [i, n - j - 1]$. 
This follows since $|a_k - a_{k + 1}| = 1$ implies that
\begin{equation*}
    \frac{k - i + 1}{\dbinom{k}{k - i}} \cdot \frac{n - k - j}{\dbinom{n - k - 1}{n - k - j - 1}} = \pm(ni - ki - kj - j).
\end{equation*}
Using the fact that for any $a \geq b + 1$, $\binom{a}{b} \geq b + 1$ with equality only when $a = b + 1$ or $b = 0$, we see that the left-hand side is an integer only in the following cases:
\begin{itemize}
    \item When $k = i = n - j - 1$. 
    In this case, we get $n = i + j + 1 = l + 1$, which contradicts our choice of $n$.
    \item When $k = i$ and $j = 1$. 
    In this case, the left-hand side is $1$. 
    But we also have
    \begin{equation*}
        ni - ki - kj - j = i(n - i - 1) - 1 \geq 3
    \end{equation*}
    which means the right-hand side can never be $1$.
\end{itemize}
This proves that no word of the form $\one^k\zero^{n - k}$ can have $M_{n, p} - 1$ occurrences of $p$.

We now show that any word that contains $p$ and has at least one occurrence of $\texttt{01}$ cannot have exactly $M_{n, p} - 1$ occurrences of $p$. 
Suppose to the contrary that such a word $v$ exists. 
Using \Cref{2runoptform}, we can assume $v$ has exactly one occurrence of $\zero\one$ by swapping adjacent occurrences of $\texttt{01}$. 
Hence, we have
\begin{equation*}
    v = \texttt{1}^{k - 1} \texttt{0} \texttt{1} \texttt{0}^{n - k - 1}
\end{equation*}
for some $k \in [i, n - j]$. 
Setting $w = \texttt{1}^{k}\texttt{0}^{n - k}$, we get
\begin{equation*}
    c_p(v) = c_p(w) + \binom{k - 1}{i - 1}\binom{n - k - 1}{j - 1}.
\end{equation*}
Since $w$ has $2$ runs and $c_p(v) = M_{n, p} - 1$, we get $c_p(w) = M_{n, p}$. 
If $j \geq 2$, this gives $k = i$ and $n - k = j$ and hence $n = i + j = l$, which is a contradiction. 
If $j = 1$, then we still get $k = i$. 
But it can be checked that in this case, $\texttt{1}^{i + 1}\texttt{0}^{n - i - 1}$ has more occurrences of $p$ than $w$ and hence contradicts the definition of $M_{n, p}$.
\end{proof}

\begin{proposition}
The binary word $\one\zero\one$ does not have an internal zero at any $n \geq 0$ except $n = 6$.
\end{proposition}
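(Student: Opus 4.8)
The claim is that $p = \texttt{101}$ has an internal zero at $n$ if and only if $n = 6$. Since $p$ has length $3$, by the remarks following the definition of internal zeros, $p$ cannot have one at $n \le 3$, and at $n = 4$ we can invoke \Cref{l+1}: since $p$ has runs of sizes $1, 1, 1$, we get $B_{4, p}(k) = r_{k-1}$, which is $3$ for $k = 2$ and $0$ for $k \ge 3$, so the sequence is $(B_{4,p}(0), \dots) $ with no gap after the last nonzero entry — no internal zero. So the plan is to understand, for each $n \ge 5$, exactly which values $k$ satisfy $B_{n, p}(k) \ne 0$, and show the nonzero set is an interval $[0, M_{n,p}]$ precisely when $n \ne 6$, and has exactly one gap when $n = 6$.

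The first step is to pin down $M_{n, p}$ and the small-$k$ values. By \Cref{maxform} with $i = k = 1$, $j = 1$, we have $M_{n, p} = \max\{abc \mid a + b + c = n\}$, which is the balanced product: roughly $(n/3)^3$, explicitly computable by cases on $n \bmod 3$. From \Cref{0count}, \Cref{1count}, \Cref{2count}, \Cref{3count}, \Cref{4count} (all applicable since $p$ is alternating of length $3$ with $r = 3$, $r_1 = 3$, $r_2 = 0$), we get $B_{n,p}(0), B_{n,p}(1) \ne 0$ always, $B_{n,p}(2) = 3\binom{n-3}{1} = 3(n-3) \ne 0$ for $n \ge 4$, $B_{n,p}(3) = 3(n-3) \ne 0$ for $n \ge 4$ (using the $l < 3$... wait, here $l = 3$, so $B_{n,p}(3) = r_1\binom{n-r-1}{l-r+1} = 3\binom{n-4}{1} = 3(n-4) \ne 0$ for $n \ge 5$), and $B_{n,p}(4) = 7n - 31 \ne 0$ for $n \ge 5$. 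So the values $k \in \{0, 1, 2, 3, 4\}$ are all attained for every $n \ge 5$. The bulk of the argument is therefore to show that every $k$ with $5 \le k \le M_{n,p}$ is attained, with exactly one exception when $n = 6$.

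For this I would use a constructive/continuity argument on words of the form $\texttt{1}^a \texttt{0}^b \texttt{1}^c$ (which realize $abc$ occurrences) together with the one-occurrence-of-$\texttt{01}$ perturbation used in \Cref{2runsmaxminus1}. Concretely: the set $\{abc : a+b+c = n,\ a,b,c \ge 1\}$ realizes a fairly dense set of values, and inserting a single mismatched letter (a word like $\texttt{1}^{a-1}\texttt{0}\texttt{1}^{c'}\texttt{0}\cdots$) changes the count by a controlled binomial amount, letting us fill the gaps between consecutive products $abc$. More systematically, one can show that the map $w \mapsto c_p(w)$ restricted to a well-chosen family of words (e.g. $\texttt{1}^a \texttt{0}^b \texttt{1}^c$ and $\texttt{1}^a \texttt{0}^b \texttt{1}^c \texttt{0} \texttt{1}^d$ type words) takes every value in $[5, M_{n,p}]$; I would organize this by descending induction from $M_{n,p}$, showing that if $k \ge 6$ is attained by some word $w$ with at least two runs of $\texttt 1$ or a run of $\texttt 0$ of size $\ge 2$, then $k - 1$ is also attained, either by a one-letter swap (shrinking a $\texttt 1$-run or moving a letter) or by passing to $\texttt 1^{a'}\texttt 0^{b'}\texttt 1^{c'}$ with $a'b'c'$ one smaller. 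The exceptional case $n = 6$ should fall out because $M_{6,p} = 2 \cdot 2 \cdot 2 = 8$, the realizable products with $a+b+c = 6$ are $\{1\cdot1\cdot4, 1\cdot2\cdot3, 2\cdot2\cdot2\} = \{4, 6, 8\}$, and one checks by the perturbation analysis that $7$ is genuinely unreachable (any word of length $6$ containing $\texttt{101}$ seven times would have to be close to $\texttt{110101}$-type configurations, and a direct finite check rules these out) while $5$ is reachable (e.g. via a word like $\texttt{110101}$... this needs the explicit small computation). For all $n \ne 6$ with $n \ge 7$, the products $abc$ together with single perturbations should already cover everything down to $5$, so no internal zero occurs; the alternating-word part of \Cref{IZgenres} (no internal zero at any $n \ge 7$) is consistent with and presumably used for this.

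The main obstacle is the gap-filling step: proving that every integer in $[5, M_{n,p}]$ is actually the occurrence count of some length-$n$ word. The products $abc$ alone are too sparse (they have gaps growing like $n$), so one must carefully analyze the intermediate words with a bounded number of $\texttt{01}$ descents and show the achievable counts, as a function of the run-length parameters, sweep out a contiguous range. Handling the boundary behavior — small $n$ (the finite check at $n = 5, 6, 7, 8$) and making sure the induction's base cases connect to the $k \le 4$ values computed above — is where the genuine care is needed; the $n = 6$ exception is exactly the place where this sweeping argument has just enough slack to fail, so the proof must locate that failure precisely rather than argue it away.
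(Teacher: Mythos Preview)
Your proposal is a plan rather than a proof, and it has both a factual error and a missing key idea.

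The factual error: at $n = 6$ you claim $k = 7$ is unreachable and $k = 5$ is reachable. In fact it is the reverse: the paper records $B_{6,\one\zero\one}(5) = 0$, and one can check directly that, e.g., $\one\one\zero\one\zero\one$ has exactly $7$ occurrences of $\one\zero\one$. This matters because it shows your intuition about which perturbations fill which gaps is off, and it undermines the finite checks your argument would rest on.

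The missing idea: you correctly identify that the crux is the gap-filling step --- showing every $k \in [5, M_{n,p}]$ is realized --- but your proposed tools (the products $abc$, unspecified ``perturbations'', and a descending induction on $k$) do not constitute a proof. In particular, the descending step ``if $k \geq 6$ is attained then $k-1$ is attained by a one-letter swap'' is precisely the kind of claim that fails at $n = 6$ (where $6$ is attained but $5$ is not), and you give no mechanism that distinguishes $n \geq 7$ from $n = 6$.

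The paper's argument is structurally different and supplies the missing mechanism. It inducts on $n$, not on $k$: given the result for $n - 1$, appending a $\zero$ realizes every $k \in [0, M_{n-1,p}]$ at length $n$, so only the top range $[M_{n-1,p} + 1, M_{n,p}]$ must be filled directly. For $n = 3m$ (the other residues being similar) this range has size $m^2$, and the key observation is that moving a single $\zero$ from the middle run of the optimal word $\one^m \zero^m \one^m$ into the $i$th slot of the last $\one$-run decreases the count by exactly $i^2$. Moving several $\zero$s therefore decreases the count by a sum of squares, and since $m \geq 4$ (the base cases $n \leq 11$ being checked by computer), Lagrange's four-square theorem writes every integer in $[1, m^2]$ as a sum of at most four squares each at most $m^2$, finishing the argument. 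Your proposal is missing both the induction-on-$n$ reduction to the top range and this number-theoretic input.
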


\begin{proof}
We prove this result by induction on $n$. 
Using Sage \cite{Sage}, it can be checked that the result holds for $n \leq 11$. 
In particular, $B_{6, \one\zero\one}(5) = 0$ and $B_{6, \one\zero\one}(6) = 9$ and hence $\one\zero\one$ has an internal zero at $6$. 
We assume $n \geq 12$. 
Using the induction hypothesis, we can obtain a binary word of length $n - 1$ that has $k$ occurrences of $\one\zero\one$ for any $k \in [0, M_{n - 1, p}]$. 
Appending a $\zero$ to the end of this word gives a binary word of length $n$ that has $k$ occurrences of $\one\zero\one$. 
Hence, to prove the result we have to show that for any $k \in [M_{n - 1, p} + 1, M_{n, p}]$, there is a binary word of length $n$ that has exactly $k$ occurrences of $\one\zero\one$.

Using \Cref{maxform}, we can show that there is a $\one\zero\one$-optimal word of length $n$ of the form
\begin{equation*}
    \one^m \zero^m \one^m \quad \text{or} \quad \one^m \zero^m \one^{m + 1} \quad \text{or} \quad \one^m \zero^{m + 1} \one^{m + 1}
\end{equation*}
for some $m \geq 4$ (since $n \geq 12$). 
We consider the case when $n = 3m$, that is, when $w = \one^m \zero^m \one^m$ is $\one\zero\one$-optimal and $M_{n, \one\zero\one} = m^3$. 
The other cases can be dealt with similarly. 
Note that in this case, we have $M_{n - 1, \one\zero\one} = m^3 - m^2$ since $\one^{m - 1}\zero^m \one^m$ is $\one\zero\one$-optimal. 
We prove the result by showing that, given any $k \in [m^2]$, we can shift $\zero$s in $w$ so that the resulting word has $m^3 - k$ occurrences of $\one\zero\one$.

Consider the spaces between the $\one$s in the third run of $w$ as slots. 
Hence, there are $m$ slots where the $i^{th}$ slot is the space immediately after the $i^{th}$ $\one$ in the third run of $w$. 
We claim that moving a $\zero$ from the second run to the $i^{th}$ slot reduces the number of occurrences of $\one\zero\one$ by $i^2$. 
This is because the $\zero$ originally contributed $m^2$ occurrences of $\one\zero\one$ but now contributes $(m + i)(m - i) = m^2 - i^2$ occurrences. 
Using the same idea, if $j$ $\zero$s from the second run are moved to the (not necessarily distinct) slots $i_1, i_2, \ldots, i_j$, then the number of occurrences of $\one\zero\one$ decreases by $i_1^2 + i_2^2 + \cdots + i_j^2$.

\begin{example}
    For $m = 4$, suppose we shift $\zero$s in $w$ into slots as shown below. 
    \begin{center}
        \begin{tikzpicture}
            \node at (0, 0) {\one};
            \node at (0.5, 0) {\one};
            \node at (1, 0) {\one};
            \node at (1.5, 0) {\one};
            
            \node at (2, 0) {\zero};
            \node (1) at (2.5, 0) {\zero};
            \node (2) at (3, 0) {\zero};
            \node (3) at (3.5, 0) {\zero};
            
            \node at (4, 0) {\one};
            \node (s1) at (4.25, 0) {};
            \node at (4.5, 0) {\one};
            \node (s2) at (4.75, 0) {};
            \node at (5, 0) {\one};
            \node at (5.5, 0) {\one};

            \draw [->] (1.south)..controls +(down:8mm) and +(down:12mm)..(s2.south);

            \draw [->] (2.north)..controls +(up:6mm) and +(up:9mm)..(s2.north);

            \draw [->] (3.south)..controls +(down:3mm) and +(down:5mm)..(s1.south);
        \end{tikzpicture}
    \end{center}
    This results in the word $\one\one\one\one\zero\one\zero\one\zero\zero\one\one$ which has $4^3 - 1^2 - 2^2 - 2^2 = 55$ occurrences of $\one\zero\one$.
\end{example}

Since $m \geq 4$, the result now follows from Lagrange's four-square theorem \cite[Theorem 11-3]{lagthm} which states that any positive integer can be written as the sum of $4$ integral squares.
\end{proof}

\begin{proposition}
Let $p$ be a binary word with $3$ runs that is of length $l \geq 4$.
\begin{itemize}
    \item If $p$ is not trivially equivalent to $\one\one\zero\one$, we have $B_{n, p}(M_{n, p} - 1) = 0$ for all $n \geq l + 2$. 
    In particular, $p$ has an internal zero at $n$ for all $n \geq l + 2$.
    
    \item If $p$ is trivially equivalent to $\one\one\zero\one$, then $p$ does not have an internal zero at $6 = l + 2$, but for $n \geq 7$, we have $B_{n, p}(M_{n, p} - 1) = 0$.
\end{itemize}
\end{proposition}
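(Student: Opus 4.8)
The strategy parallels that of \Cref{2runsmaxminus1}, but now over the two‑parameter family of words $\one^a\zero^b\one^c$.

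\textbf{Reduction.} Using trivial equivalences we may assume $p=\one^i\zero^j\one^k$ with $i,j,k\ge 1$ and $i+j+k=l\ge 4$, and (replacing $p$ by $p^r$ if needed) that $i\ge k$; in this normalization $p$ is trivially equivalent to $\one\one\zero\one$ precisely when $(i,j,k)=(2,1,1)$. The case $p=\one\one\zero\one$, $n=6$ I dispose of directly: by \Cref{maxform}, $M_{6,\one\one\zero\one}=\max\{\binom a2 bc\mid a+b+c=6\}=6$; the values $0,1,2,3,4$ are attained because the corresponding counts from \Cref{0count,1count,2count,3count,4count} are all positive (they equal $42,6,6,5,1$), while $\one\one\zero\one\zero\one$ has exactly $5$ occurrences of $\one\one\zero\one$ and $\one\one\one\one\zero\one$ has exactly $6$. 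Hence $\one\one\zero\one$ has no internal zero at $6=l+2$, and it remains to prove $B_{n,p}(M_{n,p}-1)=0$ for all $n\ge l+2$ when $p\not\sim\one\one\zero\one$ and for all $n\ge l+3$ when $p\sim\one\one\zero\one$.

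\textbf{Step 1: words with three runs.} For $a+b+c=n$ put $N(a,b,c)=\binom ai\binom bj\binom ck$, the number of occurrences of $p$ in $\one^a\zero^b\one^c$; by \Cref{maxform}, $M_{n,p}=\max N$, and by \Cref{layeranalogue} this maximum is attained. I claim $N$ never takes the value $M_{n,p}-1$. On each line of the triangle $\{a+b+c=n\}$ (fix one coordinate) $N$ is a product of two binomial coefficients, hence log‑concave; consequently $N$ is unimodal on the triangle under the elementary moves $(a,b,c)\mapsto(a\pm1,b\mp1,c)$ and $(a,b,c)\mapsto(a,b\pm1,c\mp1)$, so if $N$ took the value $M_{n,p}-1$ some elementary move would join a configuration with $N=M_{n,p}$ to one with $N=M_{n,p}-1$. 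A direct computation gives
\begin{equation*}
N(a+1,b-1,c)-N(a,b,c)=\binom ck\cdot\frac{\binom{a}{i-1}\binom{b-1}{j-1}}{ij}\,\bigl(ib-(a+1)j\bigr),
\end{equation*}
so $\bigl\lvert N(a+1,b-1,c)-N(a,b,c)\bigr\rvert=1$ forces
\begin{equation*}
\binom ck\binom{a}{i-1}\binom{b-1}{j-1}\,\bigl\lvert\,ib-(a+1)j\,\bigr\rvert=ij ,
\end{equation*}
and symmetrically for the $b\leftrightarrow c$ moves. Using $\binom mt\ge t+1$ for $m\ge t+1$, with equality only when $m=t+1$ or $t=0$ (exactly as in \Cref{2runsmaxminus1}), one checks that near a maximizing configuration these equations can hold only when $n\le l+1$; hence no such move exists for $n\ge l+2$, and $M_{n,p}-1$ is not attained by a three‑run word.

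\textbf{Step 2: words with four or more runs, and the main obstacle.} Here the argument needs structural control of near‑optimal words. One shows that any word $w$ with $c_p(w)\ge M_{n,p}-1$ can be carried, by adjacent transpositions that move letters toward the shape $\one^a\zero^b\one^c$ — each of which does not decrease $c_p$, since $p$ begins and ends with a run of $\one$s — into a word $\one^a\zero^b\one^c$ with $c_p\ge M_{n,p}-1$; if moreover $w$ has at least four runs and $c_p(w)=M_{n,p}-1$, then (using that for $p\not\sim\one\one\zero\one$ all $p$‑optimal words have three runs, cf.\ \Cref{2runoptform,3runallopt3run}) $w$ sits exactly one such transposition below a three‑run optimum. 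It then remains to enumerate the (at most two) run‑boundary neighbors of each three‑run optimal word $\one^a\zero^b\one^c$, compute the resulting drop in $c_p$ as a product of binomial coefficients, and invoke $\binom mt\ge t+1$ to see the drop is at least $2$ once $n\ge l+2$ — the sole exception being $(i,j,k)=(2,1,1)$, where the drop can equal $1$ precisely at $n=l+2=6$ (realized by $\one^3\zero^2\one\to\one^2\zero\one\zero\one$) but is again at least $2$ for $n\ge l+3$. Together with Step 1 this proves the proposition. The crux — and the reason $\one\one\zero\one$ is exceptional — is the Diophantine bookkeeping pervading both steps: within each move type one must split according to which of $i,j,k$ (and of the slacks $a-i,\ b-j,\ c-k$) equal $0$ or $1$, and verify in every branch that a difference of $1$ occurs only for $n\le l+1$, the unique surviving solution being $(i,j,k)=(2,1,1)$, $n=l+2$; a secondary subtlety is that for this one word there genuinely exist $p$‑optimal words with four runs, so the "all optima have three runs" shortcut of Step 2 fails there and the case $n=l+2$ must instead be settled by the explicit computation above.
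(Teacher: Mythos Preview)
Your Step 1 (no three-run word attains $M_{n,p}-1$) is essentially the paper's final paragraph, and the reduction and the $n=6$ computation for $\one\one\zero\one$ are fine.

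Step 2 is where the argument breaks down. Two concrete problems:

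\textbf{The monotonicity claim is unproved and, as stated, false.} You assert that adjacent transpositions ``moving toward the shape $\one^a\zero^b\one^c$'' do not decrease $c_p$. But for $p=\one\one\one\zero\one$ and $w=\one\one\one\zero\one\zero\one$ (which has $c_p(w)=6$), swapping positions $5,6$ gives the three-run word $\one\one\one\zero\zero\one\one$ with $c_p=4$; this is a run-reducing transposition that \emph{decreases} $c_p$. So at best you can hope that \emph{some} non-decreasing sequence exists, and that requires proof.

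\textbf{The ``all optima have three runs'' step is unjustified for $j=1$.} You cite \Cref{2runoptform,3runallopt3run}, but \Cref{3runallopt3run} applies only when the middle run has size at least $2$. For $p=\one^i\zero\one^k$ with $i+k\ge 3$ this is exactly the uncovered case, and your chain of implications (``$w$ sits exactly one transposition below a three-run optimum'') collapses without it: if the optimal word $w^{(s+1)}$ reached along your path need not have three runs, then $w^{(s)}$ need not be a run-boundary neighbor of any three-run word.

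The paper does not attempt to repair this route. Instead it splits into $j\ge 2$ and $j=1$ and argues directly that any word with four or more runs satisfies $c_p(w)\le M_{n,p}-2$. For $j\ge 2$ it picks a $\one$ with $\zero$s on both sides and shows that at least one choice of the $j$ $\zero$s straddles it, losing at least $M_{n-b,\one^i\zero^k}-M_{n-b-1,\one^i\zero^k}\ge 2$ from the naive bound. For $j=1$ it writes $c_p(w)$ as a sum over the $b$ zeros of terms $\binom{\ell}{i}\binom{r}{k}$ and invokes \Cref{2runsmaxminus1} to show any suboptimal term is already at least $2$ below the optimal one; the residual case $n-b=i+k+1$ is then handled by an explicit comparison with $\one^{i+2}\zero^{b-1}\one^k$, and this is precisely where the exception $(i,j,k)=(2,1,1)$, $n=6$ emerges. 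You would need an argument of comparable strength to close Step 2.
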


\begin{proof}
Using trivial equivalences, we can assume $p = \one^i \zero^j \one^k$ for some $i, j, k \geq 1$. 
We will first show that it is enough to prove that no word of length $n$ with $3$ runs has $M_{n, p} - 1$ occurrences of $p$.

We first assume that $j \geq 2$. 
In this case, we show that a binary word of length $n$ that has more than $3$ runs can have at most $M_{n, p} - 2$ occurrences of $p$. 
Let $w$ be a binary word of length $n$ with $b$ $\zero$s that has more than $3$ runs. 
If the first or last letter of $w$ is not $\one$ then it cannot have more than $M_{n, p} - 2$ occurrences of $p$. 
This is because, using \Cref{maxform}, we can show that $M_{n - 1, p} \leq M_{n, p} - 2$. 
In fact, we can assume that the first run of $w$ is of size at least $i$ and the last is of size at least $k$.

Since $w$ has more than $3$ runs, we can select a $\one$ in $w$ that has $\zero$s both before and after it. 
Hence, we can choose $j$ $\zero$s in $w$ in such a way that some $\zero$s before this $\one$ as well as some $\zero$s after this $\one$ are chosen. 
This means that $c_p(w) \leq (\binom{b}{j} - 1)M_{n - b, \one^i\zero^k} + M_{n - b - 1, \one^i\zero^k}$. 
Note that we have $c_p(w) \leq \binom{b}{j}M_{n - b, \one^i\zero^k} \leq M_{n, p}$. 
First assume that $w$ has at least $i + k + 2$ letters equal to $\one$. 
Using the fact that $M_{m, \one^i\zero^k} - M_{m - 1, \one^i\zero^k} \geq 2$ for $m \geq i + k + 2$, we can show that $c_p(w) \leq M_{n, p} - 2$. 
If $w$ has $i + k + 1$ letters equal to $\one$, then it is of the form
\begin{equation*}
    \one^i \quad \zero^a \quad \one \quad \zero^{b - a} \quad \one^k
\end{equation*}
for some $a \in [1, b - 1]$. 
Without loss of generality, assume $i \geq k$. 
It can be checked that the word $\one^{i + 1}\zero^{b}\one^k$ has at least $2$ more occurrences of $p$ than $w$. 
Hence, $c_p(w) \leq M_{n, p} - 2$.

Next, we assume that $j = 1$. 
Let $w$ be a word of length $n$ with $b$ $\zero$s that has more than $3$ runs. 
Just as before we can assume that the first run of $w$ consists of at least $i$ $\one$s and the last consists of at least $k$ $\one$s. 
The number of occurrences of $p$ is a sum of $b$ terms of the form $\binom{l}{i}\binom{r}{k}$ where $l + r \leq n - b$. 
This is because each $\zero$ that has $l$ $\one$s before it and $r$ $\one$s after it contributes $\binom{l}{i}\binom{r}{k}$ occurrences of $p$.

First suppose $n - b \geq i + k + 2$. 
In this case, if the contribution of any of the $\zero$s is not $M_{n - b, \one^i\zero^k}$, then we have seen in \Cref{2runsmaxminus1} that it is at most $M_{n - b, \one^i\zero^k} - 2$. 
Hence, the maximum number of occurrences of $p$ in a word with $b$ $\zero$s is attained only when all terms are equal to $M_{n - b, \one^i\zero^k}$ and otherwise it is at least $2$ less than this. 
Hence, it is enough to show that $M_{n, p} - 1$ occurrences of $p$ is not possible in any word with $3$ runs.

Next, suppose $n - b = i + k + 1$. 
Just as before, this means that $w$ is of the form
\begin{equation*}
    \one^i \quad \zero^a \quad \one \quad \zero^{b - a} \quad \one^k
\end{equation*}
for some $a \in [1, b - 1]$. 
Without loss of generality, we assume $i \geq k$. 
It can be checked that if $i \geq 3$, then the word $w' = \one^{i + 2} \zero^{b - 1} \one^k$ has at least two more occurrences of $p$ than $w$. 
If $i = 2$, then $p = \one\one\zero\one$ or $p = \one\one\zero\one\one$. 
Even in these cases, if $n \geq l + 3$, then $c_p(w') \geq c_p(w) + 2$. 
For $n = l + 2$, one can check that $\one\one\zero\one$ does not have internal zeroes whereas $p = \one\one\zero\one\one$ satisfies $B_{n, p}(M_{n, p} - 1) = 0$.

We now show that no binary word with $3$ runs can have exactly $M_{n, p} - 1$ occurrences of $p$. 
Using \Cref{lcprop} and the same ideas as \Cref{2runsmaxminus1}, we have to show that if $\one^a\zero^b\one^c$ is $p$-optimal of length at least $l + 2$, then
\begin{equation*}
    \binom{a}{i}\left[\binom{b}{j}\binom{c}{k} - \binom{b - 1}{j}\binom{c + 1}{k}\right]
\end{equation*}
and similar expressions cannot be equal to $1$. 
We have already seen that this is not possible if either $j \geq 2$ or $k \geq 2$. 
If $j = k = 1$, then since $\one^a\zero^b\one^c$ is $p$-optimal and of length at least $l + 2$, one can show that we must have $a \geq i + 1$. 
This gives us the required result.
\end{proof}

We also note the following result, which is a consequence of the first part of the above proof.

\begin{result}\label{3runallopt3run}
Let $p$ be a binary word with $3$ runs whose second run has size at least $2$. 
Then any $p$-optimal word also has $3$ runs.
\end{result}

\subsection{Log-concavity and unimodality}

If $p$ has an internal zero at $n$, then the sequence $(B_{n, p}(k))_{k \geq 0}$ can not be unimodal. 
Hence, by \Cref{IZgenres}, the only binary words with at most $3$ runs for which these sequences might be unimodal are the alternating words. 
Clearly, $(B_{n, \texttt{1}}(k))_{k \geq 0}$ is the sequence $(\binom{n}{k})_{k \geq 0}$ which is not only unimodal, but also log-concave. 
For $p = \texttt{10}, \texttt{101}$ and $n \geq 4$, by \Cref{0count,1count,2count}, we have $B_{n, p}(0) > B_{n, p}(1) < B_{n, p}(2)$ and hence the sequence $(B_{n, p}(k))_{k \geq 0}$ is not unimodal.

\begin{proposition}\label{lcprop}
Let $A = (a_i)_{i \geq 0}$ and $B = (b_i)_{i \geq 0}$ be log-concave sequences of non-negative numbers without internal zeroes. 
For each $n \geq 0$ and $i \in [0, n]$ define $c_n(i) = a_ib_{n - i}$, $C_n = (c_n(i))_{i = 0}^n$, and $M_n = \operatorname{max}(C_n)$. 
The following hold for any $n \geq 0$.
\begin{itemize}
    \item The sequence $C_n$ is log-concave and does not have internal zeroes.
    \item If $c_n(i) = c_n(i + 1)$ for some $i \in [0, n - 1]$, then $M_n = c_n(i)$.
    \item If $M_n = c_n(i)$ for some $i \in [0, n]$, then $M_{n + 1} = \operatorname{max}\{c_{n + 1}(i), c_{n + 1}(i + 1)\}$.
    \item The sequence $(M_n)_{n \geq 0}$ is log-concave and does not have internal zeroes.
\end{itemize}
\end{proposition}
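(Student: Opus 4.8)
The plan is to prove the four bullet points in order, using each to power the next. The first bullet is the classical fact that the ``synchronized convolution'' (Hadamard-type product along antidiagonals) of two log-concave sequences without internal zeros is again log-concave without internal zeros; since $c_n(i) = a_i b_{n-i}$, the no-internal-zeros claim is immediate (a zero of $c_n$ forces a zero of $a$ or of $b$ in a range flanked by nonzeros), and log-concavity follows from $c_n(i)^2 = a_i^2 b_{n-i}^2 \geq (a_{i-1}a_{i+1})(b_{n-i-1}b_{n-i+1}) = c_n(i-1)\,c_n(i+1)$, where the middle inequality is just the product of the two log-concavity hypotheses (with the usual care that, absent internal zeros, the relevant neighboring terms are positive so no degenerate $0 \geq (\text{positive})$ situation arises). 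I expect this step to be routine.

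For the second bullet, I would argue that a log-concave sequence without internal zeros is unimodal, so once $C_n$ rises to its max and falls, it can only be flat at the very top; more precisely, if $c_n(i) = c_n(i+1)$ then log-concavity gives $c_n(i-1) \leq c_n(i)$ and $c_n(i+2) \leq c_n(i+1)$ (a strict increase followed by equality would, by log-concavity $c_n(i)^2 \geq c_n(i-1)c_n(i+1)$, be fine, but a later strict increase is ruled out), hence the equal pair sits at the plateau achieving $M_n$. For the third bullet, I would use the first two: $C_{n+1}$ is log-concave without internal zeros, hence unimodal, and $c_{n+1}(i) = a_i b_{n+1-i}$, $c_{n+1}(i+1) = a_{i+1} b_{n-i}$; writing $c_{n+1}(i) = c_n(i)\cdot(b_{n+1-i}/b_{n-i})$ and $c_{n+1}(i+1) = c_n(i)\cdot(a_{i+1}/a_i)$ (interpreting ratios with the no-internal-zero convention), one shows that the index where $C_{n+1}$ peaks is either $i$ or $i+1$: moving from the peak index of $C_n$, the convolution structure forces the new peak to shift by at most one. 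This is the standard ``the mode of a convolution of unimodal sequences moves by at most one when you convolve with one more term'' phenomenon, and I would prove it by comparing ratios $c_{n+1}(t+1)/c_{n+1}(t)$ for $t < i$ and $t > i+1$ against the corresponding ratios for $C_n$ to show the sign pattern of consecutive differences of $C_{n+1}$ changes exactly once, at $t = i$ or $t = i+1$.

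The fourth bullet — log-concavity of $(M_n)_{n\geq 0}$ — is where I expect the real work. The no-internal-zeros part is easy (if $M_n = 0$ then $C_n$ is identically zero, forcing some $a_i$ or $b_j$ with small index to vanish, which together with no internal zeros of $A,B$ makes all large-index terms vanish too, so the $M$-sequence, once zero, stays zero). For $M_n^2 \geq M_{n-1}M_{n+1}$: let $M_n = c_n(i) = a_i b_{n-i}$. By the third bullet applied at $n-1 \to n$ and at $n \to n+1$, the maximizing indices for $M_{n-1}$ and $M_{n+1}$ lie in $\{i-1,i\}$ and $\{i,i+1\}$ respectively. So it suffices to bound $M_{n-1} \leq \max\{a_{i-1}b_{n-i}, a_i b_{n-1-i}\}$ and $M_{n+1} \leq \max\{a_{i+1}b_{n-i}, a_i b_{n+1-i}\}$ and check all resulting cases of the product $M_{n-1}M_{n+1}$ against $M_n^2 = a_i^2 b_{n-i}^2$. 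In each case the needed inequality reduces to log-concavity of $A$ alone (e.g.\ $a_{i-1}a_{i+1} \leq a_i^2$), of $B$ alone (e.g.\ $b_{n-1-i}b_{n+1-i}\leq b_{n-i}^2$), or to a ``mixed'' case like $a_{i-1}b_{n-i}\cdot a_i b_{n+1-i} \leq a_i^2 b_{n-i}^2$, i.e.\ $a_{i-1}b_{n+1-i} \leq a_i b_{n-i}$, which follows because $a_{i-1} \leq a_i$ would suffice only if $b_{n+1-i}\leq b_{n-i}$ — and this last fact holds because $M_n = a_i b_{n-i}$ sits at the mode, which by the third bullet's shifting statement forces $b$ to be past its own relevant threshold at that index. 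The bookkeeping of which neighbor achieves each of $M_{n-1}, M_{n+1}$ and extracting the right monotonicity of $a$ or $b$ at the mode is the crux; I would handle it by first establishing, from the mode-shift lemma, the two inequalities $a_{i+1}b_{n-i} \leq a_i b_{n-i}$ or $a_{i+1}b_{n-1-i}\leq\dots$ type relations that pin down whether we are on the increasing or decreasing side of each factor at index $i$, and then the four-case check becomes mechanical.
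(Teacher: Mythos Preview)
Your approach is correct and matches the paper's, which likewise reduces everything to the log-concavity of $C_n$ together with mode-shift lemmas (stated there as: $c_n(i) < c_n(i+1)$ implies $c_{n+1}(i) < c_{n+1}(i+1)$; $c_n(i) > c_n(i+1)$ implies $c_{n+1}(i+1) > c_{n+1}(i+2)$; and the corresponding weak inequalities when $c_n(i) = c_n(i+1)$) and then omits the remainder as ``case-by-case checks''. One remark on your fourth bullet: the ``mixed'' cases you flag as the crux are in fact immediate, because the cross-products collapse to entries of $C_n$ itself. For instance, if $M_{n-1} = c_{n-1}(i-1) = a_{i-1}b_{n-i}$ and $M_{n+1} = c_{n+1}(i) = a_i b_{n+1-i}$, then
\[
M_{n-1}M_{n+1} = (a_{i-1}b_{n+1-i})(a_i b_{n-i}) = c_n(i-1)\,c_n(i) \leq M_n^2,
\]
and the other mixed pairing is symmetric; the two ``pure'' pairings reduce to log-concavity of $A$ or $B$ alone. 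No separate extraction of monotonicity of $a$ or $b$ at the mode is required.
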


\begin{proof}
Log-concavity of $C_n$ is a direct consequence of the log-concavity of the sequences $A$ and $B$. 
The remaining claims can be proved using the following which hold for any $n \geq 0$ and $i \in [0, n - 1]$.
\begin{itemize}
    \item $c_n(i) < c_n(i + 1)$ implies $c_{n + 1}(i) < c_{n + 1}(i + 1)$
    \item $c_n(i) > c_n(i + 1)$ implies $c_{n + 1}(i + 1) > c_{n + 1}(i + 2)$
    \item $c_n(i) = c_n(i + 1)$ implies $c_{n + 1}(i) \leq c_{n + 1}(i + 1)$ and $c_{n + 1}(i + 1) \geq c_{n + 1}(i + 2)$
\end{itemize}
We omit the proofs of the above statements as well as how they imply the proposition since they mostly consist of case-by-case checks.
\end{proof}

Together with \Cref{maxform}, this gives us the following result. 
Note that any word with at most $3$ runs is trivially equivalent to one of the form mentioned in the following result.

\begin{theorem}
Let \textup{$p = \texttt{1}^i \texttt{0}^j \texttt{1}^k$} for some $i, k \geq 0$ and $j \geq 1$. 
We have the following:
\begin{itemize}
    \item The sequence $(M_{n, p})_{n \geq 0}$ is log-concave.
    \item If $\one^a\zero^b\one^c$ is $p$-optimal, so is at least one word in $\{\one^{a + 1}\zero^b\one^c, \one^a\zero^{b + 1}\one^c, \one^a\zero^b\one^{c + 1}\}$.
\end{itemize}
\end{theorem}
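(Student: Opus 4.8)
The plan is to express $(M_{n,p})_{n\geq 0}$ as a two-fold ``convolution maximum'' of columns of Pascal's triangle and then apply \Cref{lcprop} twice. For $m\geq 0$ set $\alpha_m=\binom{m}{i}$, $\beta_m=\binom{m}{j}$, $\gamma_m=\binom{m}{k}$. Each of $(\alpha_m)_{m\geq 0}$, $(\beta_m)_{m\geq 0}$, $(\gamma_m)_{m\geq 0}$ is a sequence of non-negative integers whose zeros form an initial block, so none has an internal zero, and each is log-concave (the inequality $\binom{m}{t}^2\geq\binom{m-1}{t}\binom{m+1}{t}$ being an elementary property of columns of Pascal's triangle). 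By \Cref{maxform} (and trivially when $n<l$, where all quantities vanish), $M_{n,p}=\max\{\alpha_a\beta_b\gamma_c : a+b+c=n\}$.

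For the first bullet I would apply \Cref{lcprop} to $A=(\alpha_m)_{m\geq 0}$ and $B=(\beta_m)_{m\geq 0}$: writing $c_s(t)=\alpha_t\beta_{s-t}$ and $N_s=\max_{0\leq t\leq s}c_s(t)$, the proposition gives that $(N_s)_{s\geq 0}$ is log-concave with no internal zeros. Applying \Cref{lcprop} a second time to $A'=(N_s)_{s\geq 0}$ and $B'=(\gamma_m)_{m\geq 0}$, and writing $d_n(s)=N_s\gamma_{n-s}$ and $\widetilde M_n=\max_{0\leq s\leq n}d_n(s)$, one unwinds the nested maxima to obtain $\widetilde M_n=\max\{\alpha_a\beta_b\gamma_c:a+b+c=n\}=M_{n,p}$; the last bullet of \Cref{lcprop} then yields that $(M_{n,p})_{n\geq 0}$ is log-concave.

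For the second bullet, fix $n\geq l$, so $M_{n,p}\geq 1$, and suppose $\one^a\zero^b\one^c$ is $p$-optimal, so that $\alpha_a\beta_b\gamma_c=M_{n,p}=\widetilde M_n$ and in particular $\alpha_a,\beta_b,\gamma_c$ are all positive. Put $s=a+b$. Since $N_s\geq c_s(a)=\alpha_a\beta_b$, we have $d_n(s)=N_s\gamma_c\geq\alpha_a\beta_b\gamma_c=\widetilde M_n\geq d_n(s)$, forcing $d_n(s)=\widetilde M_n$ and, on dividing by $\gamma_c\neq 0$, $N_s=\alpha_a\beta_b=c_s(a)$. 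Applying the third bullet of \Cref{lcprop} to the outer convolution gives $M_{n+1,p}=\widetilde M_{n+1}=\max\{d_{n+1}(s),d_{n+1}(s+1)\}=\max\{N_s\gamma_{c+1},\,N_{s+1}\gamma_c\}$. If this maximum is $N_s\gamma_{c+1}=\alpha_a\beta_b\gamma_{c+1}$, then $\one^a\zero^b\one^{c+1}$ attains it and is $p$-optimal. Otherwise the maximum is $N_{s+1}\gamma_c$; applying the third bullet of \Cref{lcprop} to the inner convolution (using $N_s=c_s(a)$) gives $N_{s+1}=\max\{c_{s+1}(a),c_{s+1}(a+1)\}=\max\{\alpha_a\beta_{b+1},\,\alpha_{a+1}\beta_b\}$, so that $M_{n+1,p}$ equals $\alpha_a\beta_{b+1}\gamma_c$ or $\alpha_{a+1}\beta_b\gamma_c$, attained by $\one^a\zero^{b+1}\one^c$ or by $\one^{a+1}\zero^b\one^c$ respectively. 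In every case one of the three words is $p$-optimal.

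Since \Cref{lcprop} is available to us, the argument is mostly bookkeeping; the step needing the most care is keeping the two layers of convolution straight --- matching the words $\one^a\zero^b\one^c$ to the analytic quantities $d_n(s)$ and $c_s(t)$, and, in the second bullet, resolving the outer shift $s\mapsto s+1$ through the inner convolution into a choice between incrementing $a$ and incrementing $b$. One should also recall that the optimal-word statement is meant for $n\geq l$: when $n<l$ one has $M_{n,p}=0$, every word is vacuously $p$-optimal, so $\gamma_c$ (or $\alpha_a$, $\beta_b$) may vanish and the ``divide by $\gamma_c$'' step is not available.
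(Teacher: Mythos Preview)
Your proposal is correct and is exactly the approach the paper intends: the paper's entire proof is the sentence ``Together with \Cref{maxform}, this gives us the following result'' (referring to \Cref{lcprop}), and you have supplied precisely the two-layer application of \Cref{lcprop} that this sentence summarizes. Your remark that the second bullet should be read for $n\geq l$ is well taken --- for $n<l$ every word is vacuously $p$-optimal and the claim can fail (e.g., $p=\one\zero\one$, $n=2$, $\one\one$), so this is an implicit hypothesis inherited from \Cref{maxform}.
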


\section{Concluding remarks}\label{concsec}

In this section, we discuss some directions for future research.

\begin{enumerate}[leftmargin = 5mm]
\setlength{\itemsep}{5mm}

\item Just as in permutation patterns, two binary words $p$ and $q$ are said to be \emph{Wilf-equivalent} if $B_{n, p}(0) = B_{n, q}(0)$ for all $n \geq 0$. 
From \Cref{0count}, we see that any two words of the same length are Wilf-equivalent. 
A more interesting notion is that of \emph{strong Wilf-equivalence}.

\begin{definition}
Two binary words $p$ and $q$ are said to be \emph{strongly Wilf-equivalent} if $B_{n, p}(k) = B_{n, q}(k)$ for all $n, k \geq 0$.
\end{definition}

If $p$ and $q$ are strongly Wilf-equivalent, it is clear that their lengths must be the same. 
From \Cref{l+1}, we see that they must also have the same number of runs of each size. 
We have already noted that if $p$ and $q$ are trivially equivalent, then they are strongly Wilf-equivalent. 
Computations suggest that these are the only strong Wilf-equivalences.

\begin{conjecture}
Two binary words $p$ and $q$ are strongly Wilf-equivalent if and only if they are trivially equivalent.
\end{conjecture}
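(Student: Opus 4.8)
\medskip
\noindent\textbf{Proof proposal.}
The plan is to establish the nontrivial direction: if $p$ and $q$ are strongly Wilf-equivalent, then they are trivially equivalent. Several reductions come for free. By \Cref{l+1}, strong Wilf-equivalence forces $p$ and $q$ to have the same length $l$ and the same number $r_i$ of runs of each size $i$, hence the same total number of runs $r$. Using trivial equivalences we may assume that both words begin with $\texttt{1}$, so that $p$ and $q$ are encoded by two compositions $\alpha = (a_1, \dots, a_r)$ and $\beta = (b_1, \dots, b_r)$ of $l$ that are rearrangements of one another; the task then becomes to show that $\beta$ equals $\alpha$ or the reverse of $\alpha$.

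The heart of the argument would be to show that the entire composition $\alpha$ is recoverable, up to reversal, from the numbers $\{B_{n, p}(k)\}_{n, k \geq 0}$. The formulas already in the paper are the prototype: \Cref{2count}, \Cref{3count} and \Cref{4count} express $B_{n, p}(k)$ for $k \le 4$ in the shape $\sum_{(c, d)} m_{c, d}\binom{n - c}{d}$, where the multiplicities $m_{c, d}$ are ``boundary-aware'' run statistics such as $r_1$, $r_2 - r_{2, b}$, $r_{2, b}$, $r_3 - r_{3, b}$, $r_{3, b}$, $r_{(1, 2), b}$. I would aim to show that every $B_{n, p}(k)$ has such a form, with multiplicities $m_{c, d}(\alpha)$ counting prescribed local configurations of runs of $\alpha$ -- short runs near the two ends of $p$, and short runs lying close together inside $p$ -- and that, letting $k$ grow, these multiplicities determine the sizes of the runs lying between any two prescribed positions, hence all of $\alpha$ up to reversal. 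Concretely, one can package the relevant data as a weighted automaton on the ``prefix states'' of $p$: for a word $w$ one tracks the vector $(c_{p^{(1)}}(w), \dots, c_{p^{(l)}}(w))$, where $p^{(j)}$ denotes the length-$j$ prefix of $p$, a vector which updates affinely when a letter is appended to $w$; this presents $\sum_{w} t^{c_p(w)} x^{|w|}$ (and its run-refined versions) as the solution of a kernel-type functional equation, from whose rational-in-$t$ structure one would try to read off $\alpha$.

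I expect the main obstacle to be exactly this reconstruction step: ruling out the possibility that two genuinely different compositions (neither the reverse of the other) yield the same sequence of polynomials $(B_{n, p}(k))_n$ for all $k$. The difficulty is twofold. First, one must control $B_{n, p}(k)$ for all $k$, whereas the paper pins it down only through $k = 4$, and the formulas already acquire more and more boundary corrections as $k$ increases. Second, even granting a general formula, excluding accidental numerical coincidences between the statistics of two rearranged compositions calls for a genuine injectivity argument rather than a term-by-term comparison. A sensible first target would be to settle the conjecture for words with few runs -- say $r \le 4$ or $r \le 5$ -- where the statistics in question can be enumerated by hand using \Cref{3count} and \Cref{4count} together with the length-$(l+2)$ and length-$(l+3)$ values of $B_{n, p}$; handling these cases should suggest both the right general family of invariants and the inductive mechanism (for instance, conditioning on the size of the first run of $w$, which ties $B_{n, p}$ to the corresponding quantities for the word obtained from $p$ by deleting its first run) needed for the general statement.
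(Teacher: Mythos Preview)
There is no proof in the paper to compare against: the statement is explicitly a \emph{conjecture}, accompanied only by the remark that it has been verified for words of length up to $13$ and by the informal observation that the expressions in \Cref{expsec} involve increasingly fine boundary-aware run statistics. The paper does not claim, and does not contain, an argument settling the general case.

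Your proposal is likewise not a proof but a research programme. You correctly extract the free reductions (same length, same multiset of run sizes via \Cref{l+1}, encoding by compositions up to reversal), and you sketch a plausible strategy based on extending the pattern visible in \Cref{2count}--\Cref{4count} to all $k$. But you then explicitly identify the crux --- reconstructing the composition $\alpha$ up to reversal from the full family $\{B_{n,p}(k)\}$ --- and acknowledge that you do not know how to carry it out, citing both the lack of a general formula for $B_{n,p}(k)$ and the absence of an injectivity argument even if such a formula were available. The automaton/kernel idea and the induction on the first run are reasonable heuristics, but as written they remain suggestions rather than arguments. In short, the gap is real and named by you: the reconstruction step is the whole content of the conjecture, and neither the paper nor your proposal supplies it.
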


We have verified the above conjecture for binary words of length up to $13$ using Sage \cite{Sage}. 
There is a natural composition we can associate to a binary word using its runs. 
For example, to $\one\one\zero\zero\zero\one$, we associate the composition $(2, 3, 1)$. 
Words being trivially equivalent is equivalent to the associated compositions being equal or reverses of each other. 
From the expressions for $B_{n, p}(k)$ for $k \leq 4$ in \Cref{expsec}, we see that terms such as the number of $1$s in the composition, number of $2$s at the start or end (boundary) of the composition, etc. play a role. 
One would hope that the composition can be determined up to reversal if enough such terms play a role in expressions for higher values of $k$.

\item One obvious direction to extend the results in this paper would be to consider $m$-ary words for $m \geq 2$ instead of just binary words. 
Another possible direction is to refine the numbers $B_{n, p}(k)$ by keeping track of statistics such as number of $\one$s, number of runs, etc.

\item In this paper, we have studied binary words by focusing on their runs. 
Are there other viewpoints that can be used to obtain better results? 
For example, in \Cref{layeranalogue}, we have proved that if $p$ has at most $3$ runs, then for any length there exists a $p$-optimal word with the same number of runs as $p$. 
Is there a nicer (non-trivial) family of binary words $\mathcal{F}$ such that for any $p \in \mathcal{F}$, there exist $p$-optimal words of any given length in $\mathcal{F}$?

\item Another direction for future research could be finding expressions or generating functions for the numbers $B_{n, p}(k)$ for $k \geq 5$ as well as for $M_{n, p}$. 
This question might be easier to tackle for specific words $p$. 
For example, it is not too difficult to show that for any $i, j \geq 0$,
\begin{equation*}
    \sum_{n, k \geq 0} B_{n, \one^i \zero \one^j}(k) x^n t^k = \left(\frac{1}{1 - x}\right) \left(\sum_{k \geq 0} \prod_{l = 1}^k \frac{x}{1 - xt^{\binom{l}{i}\binom{k - l}{j}}}\right).
\end{equation*}
A particular class of words that might be interesting to study is the alternating binary words. 
These words are already interesting from the subsequence viewpoint since the alternating word of length $l$ has the highest number of distinct subsequences among all binary words of length $l$ (see \cite{maxdistsub}).

\end{enumerate}

\section*{Acknowledgements}
Krishna Menon is partially supported by a grant from the Infosys Foundation, and Anurag Singh is partially supported by the Research Initiation Grant from the Indian Institute of Technology Bhilai (No. 2009301). The computer algebra system SageMath \cite{Sage} provided valuable assistance in studying examples.

\bibliographystyle{abbrv}

\end{document}